\documentclass[12pt]{article}

\usepackage{epsf,epsfig,amsmath,amsfonts,amsthm}
\usepackage{graphicx}
\usepackage{color}
\usepackage{accents}
\usepackage{mathrsfs}
\usepackage{tikz}
\usepackage{hyperref}

\newtheorem{theorem}{Theorem}

\newtheorem{lemma}[theorem]{Lemma}

\newtheorem{remark}{Remark}
\newtheorem{assumption}{Assumption}
\newtheorem{example}{Example}

\usepackage{graphics}
\usepackage{epsfig}
\usepackage{lipsum}
\usepackage{amsfonts}
\usepackage{amssymb}
\usepackage{graphicx}
\usepackage{epstopdf}
\usepackage{algorithm}
\usepackage{algorithmic}
\usepackage{graphicx}
\usepackage{enumitem}
\usepackage{tikz}
\usepackage{booktabs}
\usepackage{float}
\usetikzlibrary{positioning,arrows.meta,shadows,fit}

\usepackage{subfigure}
\ifpdf
  \DeclareGraphicsExtensions{.eps,.pdf,.png,.jpg}
\else
  \DeclareGraphicsExtensions{.eps}
\fi

\usepackage{enumitem}

\def\x{{\bf x}}
\def\y{{\bf y}}
\def\z{{\bf z}}

\usepackage{bm}
\usepackage{mathrsfs}

\newcommand{\argmin}{\operatorname*{arg\,min}}

\usepackage{tikz}
\usetikzlibrary{arrows.meta,positioning,calc}

\usepackage{xcolor}

\usepackage{cases}
\usepackage{lipsum}
\usepackage{amsfonts}
\usepackage{graphicx}
\usepackage{epstopdf}
\usepackage{algorithmic}

\usepackage{amsopn}

\usepackage{amsmath}
\usepackage{booktabs}
\usepackage{graphicx}
\usepackage{hyperref}
\usepackage{siunitx}
\usepackage{subcaption}
\usepackage{xcolor}

\begin{document}

\title{Stability of Differential Stochastic Variational Inequalities with History-Dependent Responses and Transfer Learning}

\author{
{Xiaojun Chen\footnote{Department of Applied Mathematics, The Hong Kong Polytechnic University, Kowloon, Hong Kong
(\texttt{maxjchen@polyu.edu.hk}, \texttt{jiguo@polyu.edu.hk}, \texttt{guan1.wang@polyu.edu.hk}).},
\ Jian Guo\footnotemark[1],
\ Xin Guo\footnote{Department of Industrial Engineering and Operations Research, University of California, Berkeley, CA, USA
(\texttt{xinguo@berkeley.edu}).}
\ and Guan Wang\footnotemark[1]
}}

\maketitle
\begin{abstract} \noindent
In this paper, we propose and study a class of differential stochastic variational inequalities (DSVIs), in which an ordinary differential equation (ODE) is coupled with history-dependent stochastic variational
inequalities (SVI).
This framework models closed-loop stochastic systems with time-varying random
equilibria and includes optimization-constrained ODEs as special cases.
Under appropriate technical conditions, we establish uniqueness, measurability, and Lipschitz continuity with respect to the state of the second-stage response, and consequently the existence and uniqueness of the induced state trajectory. 
 Moreover, we construct a sample average approximation (SAA)  based on independent sample
paths and prove uniform  convergence of the approximate trajectories. 
For transfer between related stochastic environments, we derive a local $1/2$-Hölder  estimate for parametric variational inequalities with moving feasible sets and a quantitative trajectory-stability bound in terms of the initial-state difference and the Wasserstein distance between exogenous path laws. 
Numerical experiments illustrate the SAA convergence and transfer-stability results. We further apply the framework to an elderly-health monitoring system. Similarity-weighted reuse of precomputed responses achieves an accuracy close to the full-recomputation benchmark of 0.97, while reducing the online batch runtime from 86 seconds to less than one second. Perturbation and delayed-update experiments additionally characterize robustness to sensor noise and the trade-off between response freshness, predictive accuracy, and computational cost. These results provide theoretical and computational support for efficient transfer learning in history-dependent DSVI systems.
\end{abstract}

\noindent
\textbf{Keywords:} Differential variational inequality, stochastic variational inequality,
sample average approximation, stochastic process, transfer learning.

\medskip

\noindent
\textbf{MSC codes:} 90C15, 90C33, 90C39.

\section{Introduction}

\paragraph{\bf Motivating example in health monitoring.} Consider a health-monitoring system for the elderly that continuously combines smartwatch, intelligent-insole, and electronic medical record data to assess a patient’s evolving health condition. The current health state cannot generally be inferred from the latest measurement alone: cumulative activity, gait patterns, clinical history, delayed physiological effects, and adaptive feedback make the assessment intrinsically history dependent. At the same time,  a data-driven response is repeatedly estimated from the observed history and fed back into the state dynamics. For a newly enrolled patient with limited reference data, it is also desirable to reuse response trajectories learned from similar patients; however, differences in initial health conditions, sensor streams, and underlying data distributions may propagate through the resulting closed-loop system. These features naturally motivate a history-dependent differential stochastic variational inequality framework that couples projected state dynamics with stochastic variational responses. With this system come several central theoretical questions: well-posedness, sample-based approximation, and stability under source-to-target perturbations. 

\paragraph{\bf DSVI.} 
 To start, let $T>0$, $\mathcal T=[0,T]$, and $\mathcal X\subseteq\mathbb{R}^n$ be a nonempty,
closed, and convex set, and
$(\Omega,\mathcal{F},(\mathcal{F}_t)_{t\ge 0},\mathbb{P})$
be a filtered probability space satisfying the usual conditions, that is,
(i) $\mathcal{F}_0$ contains all subsets of $\mathbb{P}$-null sets in $\mathcal{F}$, and
(ii) $(\mathcal{F}_t)_{t\ge 0}$ is right-continuous:
$\mathcal{F}_t=\bigcap_{s>t}\mathcal{F}_s$ for all $t\ge 0$.
Let $
\bm{\xi}=(\bm{\xi}_t)_{t\in\mathcal T}
$ be an \((\mathcal{F}_t)\)-progressively measurable process taking values in a Borel set \(\Xi\subseteq\mathbb{R}^{\ell}\).
Moreover,  for each
$t\in\mathcal T$ and $\omega\in\Omega$, define its history up to time $t$ by
$\bm{\xi}_{\cdot\wedge t}(\omega):[0,t]\to\Xi$ with
$\bigl(\bm{\xi}_{\cdot\wedge t}(\omega)\bigr)(s)
:=\bm{\xi}_s(\omega)$ for $s\in[0,t]$.
For fixed $\omega\in\Omega$, we write
$\xi_s:=\bm{\xi}_s(\omega)$ and
$\xi_{\cdot\wedge t}:=\bm{\xi}_{\cdot\wedge t}(\omega)$.

With these notations, we propose and  study the following history-dependent differential
stochastic variational inequality system (DSVI):
\begin{subequations}\label{eq:closed_loop_model_projected_random}
\begin{numcases}{}
\begin{aligned}
\dot x(t)
&=
\Pi_{\mathcal X}\Bigl(
x(t)
-
\mathbb E\bigl[
\Phi\bigl(
t,\bm{\xi}_t,x(t),y(t,\bm{\xi}_{\cdot\wedge t})
\bigr)
\bigr]
\Bigr)
-
x(t),
\end{aligned}
&\hspace{-15pt} $t\in\mathcal T$,
\label{eq:closed_loop_model_projected_random_first}
\\
\begin{aligned}
y(t,\bm{\xi}_{\cdot\wedge t})
&\in
\operatorname{SOL}
\bigl(
R,F,\mathcal Y(t,\bm{\xi}_t);
t,\bm{\xi}_{\cdot\wedge t},x(t)
\bigr),
\end{aligned}
&\hspace{-15pt}  $t\in\mathcal T,\ \mathrm{a.s.}$,
\label{eq:closed_loop_model_projected_random_second}
\\
x(0)=\x_0.
& \nonumber
\end{numcases}
\end{subequations}
Here  $\Pi_{\mathcal X}(\cdot)$ denotes the Euclidean projection onto  ${\mathcal X}$, {$
\Phi:\mathbb R_+\times\Xi\times\mathbb R^n\times\mathbb R^m
\to\mathbb R^n
$ is a continuous mapping,} and $\x_0\in {\mathcal X}$ is the
initial state. $
\mathcal Y:\mathbb R_+\times\Xi\rightrightarrows\mathbb R^m
$
is a set-valued mapping with nonempty closed convex values such that
$
\mathcal Y(t,\xi)\subseteq\overline{\mathcal Y}
$
for all $(t,\xi)\in\mathbb R_+\times\Xi$, where
$\overline{\mathcal Y}\subset\mathbb R^m$ is compact.
And 
$
\operatorname{SOL}
\bigl(
R,F,\mathcal Y(t,\xi_t);
t,\xi_{\cdot\wedge t},\x
\bigr)
$
is the solution set of the following variational inequality (VI):

Given a fixed $t\in\mathcal T$, a history path
$
\xi_{\cdot\wedge t}:s\in[0,t]\mapsto\xi_s\in\Xi,
$
a state $\x\in\mathbb R^n$, and  two continuous mappings
$
R:\mathbb R_+\times\Xi\times\mathbb R^n\times\mathbb R^m
\to\mathbb R^m
$ and 
$
F:\mathbb R_+\times\Xi\times\mathbb R^n\times\mathbb R^m
\to\mathbb R^m
$, 
find
$\y\in\mathcal Y(t,\xi_t)$ such that
\begin{equation}\label{eq:second_VI}
\left\langle
\int_0^t
R\bigl(s,\xi_s,\x,\y\bigr)\,ds
+
F\bigl(t,\xi_t,\x,\y\bigr),
\z-\y
\right\rangle
\geq 0,
\qquad
\forall \z\in\mathcal Y(t,\xi_t).
\end{equation}
Here the integral in \eqref{eq:second_VI} is understood in a pathwise sense, that
is, as the Lebesgue integral in $s$ along each realized history path of
$\bm{\xi}$.

This DSVI  \eqref{eq:closed_loop_model_projected_random} incorporates two
structural features of stochastic control and decision problems: time-dependent/nonstationary external
inputs $\bm \xi_t$ and history-dependent responses $y(t,\bm{\xi}_{\cdot\wedge t})$. Nonstationary inputs arise in a number of applications, for example,
 health monitoring, online resource allocation,  and traffic
or energy management
\cite{besbesgurzeevi15,caozhangpoor21,chen2025differential,cutlerdrusvyatskiyharchaoui23}.
In these settings, operating conditions, user behavior, data quality, and
external environments evolve over time. Here, the external uncertainty is modeled by an exogenous stochastic
process \(\bm\xi=(\bm\xi_t)_{t\in\mathcal T}\).
The dependence on the history \(\bm\xi_{\cdot\wedge t}\) models the  influence of past input-output data, longitudinal measurements, sensor histories,
cumulative exposure, delayed effects, and adaptive feedback for decision makings
\cite{mnih15, nahumshani18, depersistesi20}.
(See Figure \ref{fig:closed_loop_architecture} for a depiction of the DSVI). 

\begin{figure}[htbp]
\centering
\begin{tikzpicture}[
    scale=1,
    transform shape,
    >=Stealth,
    every path/.style={line cap=round, line join=round},
    box/.style={
        draw=blue!55!black,
        thick,
        rounded corners=2.5mm,
        fill=blue!4,
        align=center,
        font=\normalsize,
        inner sep=5pt
    },
    topbox/.style={
        draw=blue!55!black,
        thick,
        rounded corners=2.5mm,
        fill=blue!2,
        align=center,
        font=\normalsize,
        inner sep=5pt
    },
    arrow/.style={
        ->,
        thick,
        draw=blue!65!black
    },
    lab/.style={
        font=\small,
        fill=white,
        inner sep=1pt
    }
]
\node[topbox, minimum width=4cm, minimum height=1cm] (noise) at (0,2.8)
{Environmental uncertainty\\
$\bm{\xi}=(\bm{\xi}_t)_{t\in\mathcal T}$};
\node[box, minimum width=5cm, minimum height=2.5cm] (state) at (-4.0,0)
{
{\(\mu_t^{\bm{\xi}}\)-averaged projected dynamics}\\[0.9ex]
$
\begin{aligned}
\dot x(t)
=&\,
\Pi_{\mathcal X}\Bigl(
x(t)
-\mathbb E\bigl[
\Phi\bigl(
t,\bm{\xi}_t,x(t),\\
&\hspace{12mm}
y(t,\bm{\xi}_{\cdot\wedge t})
\bigr)
\bigr]
\Bigr)-x(t)
\end{aligned}
$
};

\node[box, minimum width=5cm, minimum height=2.1cm] (resp) at (4.0,0)
{
{Parametric history-dependent SVI}\\[2ex]
$
\begin{aligned}
y(t,\bm{\xi}_{\cdot\wedge t})
&\in
\operatorname{SOL}\bigl(
R,F,\mathcal Y(t,\bm{\xi}_t);\\
&\hspace{11mm}
t,\bm{\xi}_{\cdot\wedge t},x(t)
\bigr)
\end{aligned}
$
};
\coordinate (xin) at ($(state.north west)+(1.15,0)$);
\node[font=\small] (x0) at ($(xin)+(0,0.82)$) {$x(0)=\x_0$};

\draw[arrow]
  (noise.west) -- ++(-5mm,0) -|
  node[pos=0.36, above, yshift=2pt,  lab] {$\bm{\xi}_{\cdot\wedge t}\sim\mu_t^{\bm{\xi}}$}
  ($(state.north)+(0.05,0)$);

\draw[arrow]
  (noise.east) -- ++(5mm,0) -|
  node[pos=0.36, above, yshift=2pt, lab] {$\bm{\xi}_{\cdot\wedge t}$}
  ($(resp.north)+(-0.05,0)$);

\draw[arrow]
  (x0) -- (xin);

\draw[arrow]
  (state.east) --
  node[above, yshift=2pt, lab] {$x(t)$}
  (resp.west);

\draw[arrow]
  ($(resp.south)+(0.05,0)$) |- ++(0,-1) -|
  node[pos=0.26, below, yshift=-2pt, lab]
  {$y(t,\bm{\xi}_{\cdot\wedge t})$}
  ($(state.south)+(0.05,0)$);
\end{tikzpicture}
\caption{Stochastic closed-loop architecture of the history-dependent
 DSVI.}
\label{fig:closed_loop_architecture}
\end{figure}

\paragraph{\bf Well-definedness and solution to the DSVI.}
 A natural question arises regarding the well-definedness of  this general  closed-loop DSVI  model
\eqref{eq:closed_loop_model_projected_random}. Since this DSVI system includes a number of dynamic, stochastic, and variational models under suitable restrictions, such a question has been analyzed in various special cases. For instance, suppose that there exist functions
\(r,f:[0,T]\times\Xi\times\mathbb R^n\times\mathbb R^m\to\mathbb R\)
such that, for each fixed \((t,\xi,\x)\), the functions
\(r(t,\xi,\x,\cdot)\) and \(f(t,\xi,\x,\cdot)\) are differentiable and convex,
and
$
    R(t,\xi,\x,\y)=\nabla_{\y} r(t,\xi,\x,\y)$, $
    F(t,\xi,\x,\y)=\nabla_{\y} f(t,\xi,\x,\y).
$
Then,  for \(t\in\mathcal T\), \eqref{eq:closed_loop_model_projected_random_second}  is equivalent to the
parametric optimization problem
\begin{equation}\label{eq:closed_loop_model_projected_random_opt}
y(t,\bm{\xi}_{\cdot\wedge t})
\in
\operatorname*{argmin}_{\mathbf y\in\mathcal{Y}(t,\bm{\xi}_t)}
\left\{
\int_0^t
r\bigl(s,\bm{\xi}_s,x(t),\mathbf y\bigr)\,ds
+
f\bigl(t,\bm{\xi}_t,x(t),\mathbf y\bigr)
\right\}.
\end{equation}
Assume further that \(R\equiv0\). Then 
\eqref{eq:closed_loop_model_projected_random} reduces to the one-block DSVI
with parametric optimization studied in
\cite{ChenICM2026,chen2025differential} with \(\bm\xi_t\sim P_t\). If, in addition, \(P_t\equiv P\) and
\(\mathcal Y(t,\xi)\equiv\mathcal Y(\xi)\), it further reduces to the classical
DSVI with a time-independent exogenous distribution
\cite{chen2022dynamic}. If the randomness $\bm\xi$ is removed, the model becomes a
deterministic DVI, as studied for example in
\cite{camlibel2007lyapunov,chenwang13,chenwang14,pangstewart08}. If the
projected differential equation is replaced by its equilibrium condition, one
obtains a static two-stage SVI, which is related to the stochastic VI models in
\cite{chenpongwets17,chenshapirosun19,rockafellarwets17}. Finally, when \(\mathcal X=\mathbb R^n\), the deterministic case of
\eqref{eq:closed_loop_model_projected_random} includes ODEs coupled with
lower-level parametric programs \cite{landrycaboussathairer09}. The stochastic
case includes lower-level stochastic programs through their VI optimality
conditions \eqref{eq:second_VI} \cite{luochen25}.

In this paper, we will study the well-definedness, the existence and uniqueness of a solution to \eqref{eq:closed_loop_model_projected_random} under appropriate measurability, integrability, and
solvability conditions (Section~\ref{sec:well_posedness}). The key is to ensure  that the second-stage SVI
\eqref{eq:closed_loop_model_projected_random_second} admits a unique and measurable
solution and that the expected mapping is well defined.

\paragraph{\bf Stability of the DSVI.} Another mathematical question concerns the stability of the induced first-stage trajectory. Here, stability refers to
quantitative continuous dependence on the initial state and the path law of
the exogenous process.
For differential variational inequalities, dependence
of solutions on initial data and stability of solution sets under
perturbations of the problem data have been studied in
\cite{gwinner13, pangStewart09}. Related stability results for differential
variational--hemivariational systems under perturbations of initial data,
operators, and constraint sets can be found in
\cite{tangEtAl20,xiaoEtAl22}. In stochastic settings, qualitative and
quantitative stability with respect to changes in the underlying probability
distribution has been established for stochastic generalized equations
\cite{liuRomischXu14}. More recently, the statistical robustness of
sample-average-approximation solutions under data contamination has been
studied for stochastic generalized equations \cite{guoXu23}. 

In this paper, we will
provide the stability analysis based on these earlier developments \cite{
guoXu23, liuRomischXu14, pangStewart09}. For two history-dependent DSVIs
\eqref{eq:closed_loop_model_projected_random} with  different
initial states and different path laws of the exogenous processes,  we derive
an error bound for the corresponding first-stage trajectories in terms of
the differences in the initial states and the process laws. This bound
quantifies system changes propagated from
 the history-dependent second-stage response.

\paragraph{\bf Transfer learning of the DSVI in health monitoring.}

The stability  analysis of DSVI inspires us to explore the possibility of transfer learning  \cite{bendavid10,
panYang10, taylorstone09}
between  history-dependent DSVIs in the context of  health monitoring \eqref{eq:closed_loop_model_projected_random}.
In transfer learning,  a known solution  from one system (called the source task) is exploited to find approximate solutions to another ``similar'' and unknown system (called the target task).  Transfer learning   based on stability of dynamic (controlled) systems  has  been studied, for example, in  \cite{aubin1999set, CGGR26, guolixu26}, which demonstrate its efficiency especially when the relevant data for the target task are limited and the computational cost is high.

In this paper, we apply transfer learning across history-dependent DSVIs to a health-monitoring system, enabling real-time medical responses to the health states of new  users by leveraging an existing, well-trained system developed for current users with similar characteristics.
In particular, 
the first-stage equation
\eqref{eq:closed_loop_model_projected_random_first} describes the evolution of each individual's continuous health state $(x(t))_{t\ge 0}$,  which is then projected  onto
\(\mathcal X\) to yield 
health-state labels. 
The process \(\bm{\xi}\) represents external
factors affecting the individual's health evolution. For each history
\(\bm{\xi}_{\cdot\wedge t}\), the second-stage problem estimates the response
variable \(y(t,\bm{\xi}_{\cdot\wedge t})\), based on  the accumulated fitting error  \(R\) and the terminal fitting error \(F\). This response variable  is then inserted into the
feedback mapping \(\Phi\).
Theoretically, the uniqueness of the solution to the second-stage SVI allows us to identify critical features or responses for new users that are shared by existing patients in the system; 
and the stability of the induced first-stage trajectory enables the reuse of precomputed response trajectories from similar source users to  quickly identify  the health state of new individuals with limited reference data
  (for details, see
Section~\ref{sec:health_monitoring_application}).

{In summary}, this paper 
\begin{itemize}[leftmargin=17pt]
\item
 formulates a history-dependent DSVI driven by an exogenous stochastic process. The second-stage response is defined on the history of the process, and the first-stage projected state equation is governed by the induced expected-value mapping. This formulation provides a dynamic variational framework that extends a broad class of stochastic, deterministic, and optimization-constrained models;

\item
establishes well-posedness and the convergence of Sample Average Approximation (SAA) for the history-dependent DSVI.  Uniqueness, measurability, and state-Lipschitz continuity of the second-stage response are established. These properties imply existence and uniqueness of the first-stage projected state trajectory and uniform almost sure convergence of the SAA trajectory generated from independent sample paths;
\item
develops a stability analysis for the history-dependent DSVI by
deriving a local \(1/2\)-Hölder stability estimate for parametric VIs with moving
feasible sets and {obtaining} an error bound for the first-stage trajectory with
respect to the initial state and the law of the exogenous process; and 

\item applies the proposed DSVI model to an elderly health monitoring problem
 using wearable, insole, and
clinical-record features to generate history-dependent response trajectories
and health-state scores. 
\end{itemize}

Overall, the numerical results demonstrate the effectiveness of the proposed
history dependent DSVI model for health-state monitoring under both
source-domain and transfer settings. It achieves consistently high
classification performance on held-out source-domain data and maintains
comparable prediction accuracy in the target-domain setting through the
similarity-weighted transfer strategy. Meanwhile, the transfer approach
reduces online computational latency by reusing precomputed source-domain
responses while yielding stable predictions  under perturbations.

\paragraph{\bf Organization of the paper.}
The remainder of this paper is organized as follows.
Section~\ref{sec:well_posedness} studies the well-posedness and basic solution
properties of the proposed model. Section~\ref{sec:SAA_HD_DSVI} establishes the
SAA and  its convergence.
Section~\ref{sec:transfer_VI} analyzes transfer learning and stability with
respect to the initial state and the exogenous process law.
Section~\ref{sec:numerical_analysis} presents numerical experiments.
Section~\ref{sec:health_monitoring_application} applies the framework to an
elderly-health embodied intelligence system. And Section~\ref{sec:conclusion}
concludes.

\paragraph{\bf Notation.}
Throughout the paper, 
 $\mathbb R_+:=[0,\infty)$ and, for any topological space $E$, 
$\mathcal B(E)$ denotes the Borel $\sigma$-algebra on $E$.
 $\|\cdot\|$ denotes the Euclidean norm, and $\|\cdot\|_\infty$
denotes the supremum norm for functions. For any positive integer $n$,
$
\mathcal C_n:=C(\mathcal T;\mathbb R^n)
$ denotes 
the space of continuous functions from $\mathcal T$ to $\mathbb R^n$. Moreover,
$L^1(\mathcal T;\mathbb R_+)$, $AC(\mathcal T;{\mathcal X})$, and $C([0,t];\Xi)$ denote
 integrable $\mathbb R_+$-valued functions, absolutely continuous
${\mathcal X}$-valued functions, and continuous $\Xi$-valued paths, respectively; and
$L^1(\Omega;\mathbb R_+)$ is the space of $\mathbb R_+$-valued random
variables integrable with respect to $\mathbb P$.

\section{Well-Posedness and Basic Solution Properties}
\label{sec:well_posedness}

This section studies the well-posedness and basic solution properties for the
initial-value problem of the history-dependent DSVI
\eqref{eq:closed_loop_model_projected_random}. Toward this end, we impose the
following assumption.

\begin{assumption}\label{ass:existence_first_stage}
The following conditions hold.
\begin{itemize}[leftmargin=2.5em,labelsep=0.3em]
\item[\textup{(A1)}]
For each fixed $(t,\xi,\x)\in \mathbb{R}_+\times\Xi\times\mathbb{R}^n,$ the mapping
$
R(t,\xi,\x,\cdot)
$
is monotone, and for every compact ${\cal K}\subset \mathbb{R}^n$, there exists
$a_{R,{\cal K}}\in L^1(\mathcal T;\mathbb R_+)$ such that
\[
\|R(t,\xi,\x,\y)\|
\le a_{R,{\cal K}}(t),
\
\forall t\in\mathcal T,\ \xi\in\Xi,\ \x\in {\cal K},\ \y\in\overline{\mathcal Y}.
\]
There also exists $\ell_R\in L^1(\mathcal T;\mathbb R_+)$ such that, for all
$t\in\mathcal T$, $\xi_1,\xi_2\in\Xi$,
$\x_1,\x_2\in \mathbb{R}^n$, and $\y_1,\y_2\in\mathbb R^m$,
\[
\|R(t,\xi_1,\x_1,\y_1)-R(t,\xi_2,\x_2,\y_2)\|
\le
\ell_R(t)
\bigl(
\|\xi_1-\xi_2\|
+
\|\x_1-\x_2\|
+
\|\y_1-\y_2\|
\bigr).
\]

\item[\textup{(A2)}]
For the mapping
$
F:\mathbb R_+\!\times\!\Xi\times\mathbb R^n\times\mathbb R^m
\!\to\!\mathbb R^m
$, there exist $\ell_F\in L^1(\mathcal T;\mathbb R_+)$ and a constant $m_F>0$ such
that, for all $t\!\in\mathcal T$, $\xi_1,\xi_2, \xi\in\Xi$,
$\x_1,\x_2\!\in \mathbb{R}^n$, and $\y_1,\y_2\!\in\mathbb R^m$,
\[
\|F(t,\xi_1,\x_1,\y_1)-F(t,\xi_2,\x_2,\y_2)\|
\le
\ell_F(t)
\bigl(
\|\xi_1-\xi_2\|
+
\|\x_1-\x_2\|
+
\|\y_1-\y_2\|
\bigr),
\]
\[
\left\langle
F(t,\xi,\x,\y_1)-F(t,\xi,\x,\y_2),
\y_1-\y_2
\right\rangle
\ge
m_F\|\y_1-\y_2\|^2 .
\]

\item[\textup{(A3)}]
For the mapping
$
\Phi:\mathbb R_+\times\Xi\times\mathbb R^n\times\mathbb R^m
\to\mathbb R^n
$, there exists $\ell_\Phi\in L^1(\mathcal T;\mathbb R_+)$ such
that, for all $t\!\in\mathcal T$, $\xi_1,\xi_2\in\Xi$,
$\x_1,\x_2\!\in \mathbb{R}^n$, and $\y_1,\y_2\!\in\mathbb R^m$,
\[
\|\Phi(t,\xi_1,\x_1,\y_1)-\Phi(t,\xi_2,\x_2,\y_2)\|
\le
\ell_\Phi(t)
\bigl(
\|\xi_1-\xi_2\|
+
\|\x_1-\x_2\|
+
\|\y_1-\y_2\|
\bigr).
\]
Moreover, there exists a nonnegative function
$
b_\Phi:\mathcal T\times\Xi\to\mathbb R_+
$
such that
\[
\|\Phi(t,\xi,0,0)\|
\le
b_\Phi(t,\xi),
\quad
t\in\mathcal T,\ \xi\in\Xi,
\]
and
$
b_\Phi(t,\bm\xi_t)\in L^1(\Omega;\mathbb R_+)
\  \text{for a.e. } t\in\mathcal T
$
with
$
t\mapsto
\mathbb E\bigl[b_\Phi(t,\bm\xi_t)\bigr]
\in L^1(\mathcal T;\mathbb R_+).
$
\end{itemize}
\end{assumption}

Before applying the ODE existence theorem, we first present the following basic
properties of the history-dependent second-stage response \eqref{eq:closed_loop_model_projected_random_second}.

\begin{lemma}\label{lem:second_stage_properties}
Suppose that Assumption~\ref{ass:existence_first_stage}\textup{(A1)--(A2)}
holds. Then, for every $t\in\mathcal T$, every history path
$\xi_{\cdot\wedge t}$, and every $\x\in \mathbb{R}^n$, the VI
\eqref{eq:second_VI} admits a unique solution denoted by
$
\y(t,\xi_{\cdot\wedge t},\x).
$
Moreover, for fixed $t\in\mathcal T$ and $\xi_{\cdot\wedge t}$, the mapping
$\x\mapsto \y(t,\xi_{\cdot\wedge t},\x)$ is Lipschitz continuous, i.e.,
\[
\|\y(t,\xi_{\cdot\wedge t},\x_1)
-
\y(t,\xi_{\cdot\wedge t},\x_2)\|
\le
\frac{
\ell_F(t)+\int_0^t\ell_R(s)\,ds
}{
m_F
}
\|\x_1-\x_2\|.
\]
{If, in addition, the graph
\(\{(t,\omega,\y):\y\in\mathcal Y(t,\bm\xi_t(\omega))\}\)
of the set-valued mapping
\((t,\omega)\mapsto\mathcal Y(t,\bm\xi_t(\omega))\) is measurable, then
\((t,\omega,\x)\mapsto
\y(t,\bm{\xi}_{\cdot\wedge t}(\omega),\x)\)
is measurable.}
\end{lemma}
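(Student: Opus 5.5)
Fix $t\in\mathcal T$, a history path $\xi_{\cdot\wedge t}$ and $\x\in\mathbb R^n$, and consider the operator
\[
G_{t,\x}(\y):=\int_0^t R(s,\xi_s,\x,\y)\,ds+F(t,\xi_t,\x,\y),\qquad \y\in\mathbb R^m .
\]
The plan is to show that $G_{t,\x}$ is well defined, continuous, and strongly monotone in $\y$, and then to invoke the classical existence and uniqueness theorem for strongly monotone VIs on nonempty closed convex sets.

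\emph{Well-definedness of $G_{t,\x}$.} For each $\y$, the map $s\mapsto R(s,\xi_s,\x,\y)$ must be Lebesgue integrable on $[0,t]$. Measurability follows from continuity of $R$ together with measurability of the realized path. Integrability on $\overline{\mathcal Y}$ follows from the bound $a_{R,\mathcal K}$ in (A1) with $\mathcal K=\{\x\}$. Off $\overline{\mathcal Y}$, use the Lipschitz bound in $\y$:
\[
\|R(s,\xi_s,\x,\y)\|\le a_{R,\{\x\}}(s)+\ell_R(s)\,\mathrm{dist}(\y,\overline{\mathcal Y}).
\]

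\emph{Continuity and strong monotonicity.} The Lipschitz bound in (A1) gives
\[
\Bigl\|\int_0^t \bigl(R(s,\xi_s,\x,\y_1)-R(s,\xi_s,\x,\y_2)\bigr)\,ds\Bigr\|\le \int_0^t\ell_R(s)\,ds\;\|\y_1-\y_2\|.
\]
Monotonicity of each $R(s,\xi_s,\x,\cdot)$ is preserved under integration in $s$, so the integral term is monotone. Adding the $m_F$-strongly monotone, Lipschitz map $F(t,\xi_t,\x,\cdot)$ from (A2) shows that $G_{t,\x}$ is Lipschitz and $m_F$-strongly monotone. Because $\mathcal Y(t,\xi_t)$ is nonempty, closed, convex and bounded, the VI has a solution, and strong monotonicity gives uniqueness in the standard way. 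Denote this solution by $\y(t,\xi_{\cdot\wedge t},\x)$.

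\emph{Lipschitz dependence on $\x$.} Let $\y_i=\y(t,\xi_{\cdot\wedge t},\x_i)$ for $i=1,2$; both lie in the same feasible set. Test the VI for $\y_1$ with $\z=\y_2$ and the VI for $\y_2$ with $\z=\y_1$, and add the two inequalities to get
\[
\langle G_{t,\x_1}(\y_1)-G_{t,\x_2}(\y_2),\,\y_1-\y_2\rangle\le 0 .
\]
Split the difference as
\[
G_{t,\x_1}(\y_1)-G_{t,\x_2}(\y_2)=\bigl[G_{t,\x_1}(\y_1)-G_{t,\x_1}(\y_2)\bigr]+\bigl[G_{t,\x_1}(\y_2)-G_{t,\x_2}(\y_2)\bigr].
\]
The first bracket contributes at least $m_F\|\y_1-\y_2\|^2$. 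The second bracket is bounded in norm by $\bigl(\ell_F(t)+\int_0^t\ell_R\bigr)\|\x_1-\x_2\|$, using the $\x$-Lipschitz parts of (A1) and (A2). Apply Cauchy--Schwarz and divide by $\|\y_1-\y_2\|$ to obtain the stated constant.

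\emph{Measurability (the main obstacle).} First show that $(t,\omega,\x)\mapsto G_{t,\x}(\y)$ is measurable for each fixed $\y$. For $\omega\mapsto\int_0^t R(s,\bm\xi_s(\omega),\x,\y)\,ds$, use progressive measurability of $\bm\xi$ and Fubini; joint measurability in $t$ holds because the integral is continuous in the upper limit. Continuity in $\y$ then makes $G$ Carathéodory. Next, characterize the solution as the unique fixed point
\[
\y=\Pi_{\mathcal Y(t,\bm\xi_t)}\bigl(\y-\gamma\, G_{t,\x}(\y)\bigr)
\]
for small $\gamma>0$; this map is a contraction because $G_{t,\x}$ is strongly monotone and Lipschitz. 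Since the graph of $\mathcal Y$ is measurable, the projection $(t,\omega,\mathbf u)\mapsto\Pi_{\mathcal Y(t,\bm\xi_t(\omega))}(\mathbf u)$ is Carathéodory, via standard measurable-selection and projection results (Rockafellar--Wets, Thm.~14.37). The projected-gradient iterates started from a measurable point are therefore measurable, and they converge pointwise to $\y(t,\bm\xi_{\cdot\wedge t}(\omega),\x)$. This gives the required measurability. The delicate point is that $\gamma$ must be chosen so that the map is a contraction, and an admissible $\gamma$ depends on $(t,\x)$ through the Lipschitz constant. To handle this, choose $\gamma$ as a measurable function of $(t,\x)$; alternatively, restrict to countably many rational $\gamma$ and patch the resulting measurable pieces.
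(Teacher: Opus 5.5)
Your proof is correct. Existence, uniqueness and the Lipschitz estimate in $\x$ follow the paper's argument exactly: strong monotonicity, testing the two VIs with $\z=\y_2$ and $\z=\y_1$, and the same splitting. Your well-definedness step is slightly more careful than the paper's, since (A1) only bounds $R$ for $\y\in\overline{\mathcal Y}$ and the paper never addresses $\y\notin\overline{\mathcal Y}$.

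The measurability part is where you take a genuinely different route.

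\textbf{The paper's route.} It introduces the gap function $g(t,\omega,\x,\y)=\sup_{\z\in\mathcal Y(t,\bm\xi_t(\omega))}\langle\Psi,\y-\z\rangle$ and gets its measurability from a marginal-function theorem (Aubin--Frankowska, Thm.~8.2.11). It then identifies the solution as the unique zero (minimizer) of $g$ over the feasible set. The final step, from this characterization to measurability of the solution map, is only asserted and would itself need a measurable-argmin result. This route uses only continuity and uniqueness, not Lipschitz continuity in $\y$.

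\textbf{Your route.} You write the solution as the pointwise limit of projected-gradient iterates. This needs only measurability of $(t,\omega,\mathbf u)\mapsto\Pi_{\mathcal Y(t,\bm\xi_t(\omega))}(\mathbf u)$ together with stability of measurability under composition and pointwise limits. It does use the $\y$-Lipschitz modulus $\ell_F(t)+\int_0^t\ell_R(s)\,ds$. In return it is constructive and shows at the same time that the numerical iterates are measurable.

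\textbf{Refinements to the step-size issue.} First, the admissible step size depends only on $t$, not on $(\x,\omega)$, because the monotonicity and Lipschitz constants are uniform in those variables. Second, the cleanest patching is over rational $\gamma$, using the sets on which the $\gamma$-iterates converge. These sets are measurable. Any limit on them is a fixed point of $\y\mapsto\Pi_{\mathcal Y(t,\bm\xi_t)}(\y-\gamma G_{t,\x}(\y))$, hence the VI solution for every $\gamma>0$. Every point lies in one of these sets. This version avoids needing a measurable version of $\ell_F$.

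Both routes share one caveat. Passing from a measurable graph to measurability of the closed-valued map $(t,\omega)\mapsto\mathcal Y(t,\bm\xi_t(\omega))$ requires working with a completed $\sigma$-algebra on $\mathcal T\times\Omega$.
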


\begin{proof}
Define
$
\Psi(t,\xi_{\cdot\wedge t},\x,\y)
:=
\int_0^t
R(s,\xi_s,\x,\y)\,ds
+
F(t,\xi_t,\x,\y).
$
By Assumption~\ref{ass:existence_first_stage}\textup{(A1)}, the integral is
well defined.  Moreover, by the continuity of the mappings $R,F$ and the dominated convergence theorem \cite[Theorem~2.24]{folland1999real}, the mapping
$(\x,\y)\mapsto \Psi(t,\bm\xi_{\cdot\wedge t}(\omega),\x,\y)$ is continuous for fixed
$(t,\omega)$.

Fix $t\in\mathcal T$, $\xi_{\cdot\wedge t}$, and $\x\in \mathbb{R}^n$. For
$\y_1,\y_2\in\mathbb R^m$, by
Assumption~\ref{ass:existence_first_stage}\textup{(A1)-(A2)},
\[
\begin{aligned}
&\left\langle
\Psi(t,\xi_{\cdot\wedge t},\x,\y_1)
-
\Psi(t,\xi_{\cdot\wedge t},\x,\y_2),
\y_1-\y_2
\right\rangle                                      \\
&=
\int_0^t
\left\langle
R(s,\xi_s,\x,\y_1)-R(s,\xi_s,\x,\y_2),
\y_1-\y_2
\right\rangle ds                                  \\
&\quad+
\left\langle
F(t,\xi_t,\x,\y_1)-F(t,\xi_t,\x,\y_2),
\y_1-\y_2
\right\rangle                                      \ge
m_F\|\y_1-\y_2\|^2 .
\end{aligned}
\]
Thus the mapping $\y\mapsto\Psi(t,\xi_{\cdot\wedge t},\x,\y)$ is strongly monotone.
Since the set $\mathcal Y(t,\xi_t)$ is nonempty, compact, and convex, and the mapping
$\y\mapsto\Psi(t,\xi_{\cdot\wedge t},\x,\y)$ is continuous, the  VI
\eqref{eq:second_VI} admits a unique
solution by \cite[Theorems~2.3.3 and~2.3.5(b)]{FacchineiPang2003}. Now let us denote it by
$
\y(t,\xi_{\cdot\wedge t},\x).
$
We next prove its Lipschitz dependence on $\x$. Let
$
\y_i=\y(t,\xi_{\cdot\wedge t},\x_i)
$
for $i=1,2$. From the two VIs, we have
$$
\left\langle
\Psi(t,\xi_{\cdot\wedge t},\x_1,\y_1)
-
\Psi(t,\xi_{\cdot\wedge t},\x_2,\y_2),
\y_1-\y_2
\right\rangle
\le 0.
$$
Using the strong monotonicity of
$\y\mapsto\Psi(t,\xi_{\cdot\wedge t},\x_1,\y)$, it follows that
\[
m_F\|\y_1-\y_2\|^2
\le
\left\|
\Psi(t,\xi_{\cdot\wedge t},\x_2,\y_2)
-
\Psi(t,\xi_{\cdot\wedge t},\x_1,\y_2)
\right\|
\|\y_1-\y_2\|.
\]
By Assumption~\ref{ass:existence_first_stage}\textup{(A1)-(A2)},
\[
\left\|
\Psi(t,\xi_{\cdot\wedge t},\x_2,\y_2)
-
\Psi(t,\xi_{\cdot\wedge t},\x_1,\y_2)
\right\|
\le
\left(
\int_0^t\ell_R(s)\,ds+\ell_F(t)
\right)
\|\x_1-\x_2\|.
\]
Dividing by $m_F\|\y_1-\y_2\|$ gives
\[
\|\y(t,\xi_{\cdot\wedge t},\x_1)
-
\y(t,\xi_{\cdot\wedge t},\x_2)\|
\le
\frac{
\ell_F(t)+\int_0^t\ell_R(s)\,ds
}{
m_F
}
\|\x_1-\x_2\|.
\]

Finally, we prove the measurability of the mapping
$
(t,\omega,\x)\mapsto
\y\bigl(t,\bm{\xi}_{\cdot\wedge t}(\omega),\x\bigr).
$
Define the gap function
$
g(t,\omega,\x,\y)
:=
\sup_{\z\in\mathcal Y(t,\bm\xi_t(\omega))}
\left\langle
\Psi\bigl(t,\bm\xi_{\cdot\wedge t}(\omega),\x,\y\bigr),
\y-\z
\right\rangle .
$
By the progressive measurability of $\bm\xi$ and the continuity of $\Psi$,
the function inside the supremum in the definition of $g$ is measurable. Since the graph of
$
(t,\omega)\mapsto\mathcal Y(t,\bm\xi_t(\omega))
$
is measurable, \cite[Theorem~8.2.11]{aubin1999set} implies that $g$ is
measurable.
Moreover, for each fixed $(t,\omega,\x)$, the gap function satisfies
$
g(t,\omega,\x,\y)\ge 0
$
for all
$
\y\in\mathcal Y(t,\bm\xi_t(\omega))
$,
and
$
g(t,\omega,\x,\y)=0
$
if and only if $\y$ solves the VI \eqref{eq:second_VI}. Note that the VI
\eqref{eq:second_VI} has a unique solution. Therefore
$
\y\bigl(t,\bm\xi_{\cdot\wedge t}(\omega),\x\bigr)
=
\argmin_{\y\in\mathcal Y(t,\bm\xi_t(\omega))}
g(t,\omega,\x,\y),
$
and
\((t,\omega,\x)\mapsto
\y(t,\bm\xi_{\cdot\wedge t}(\omega),\x)\)
is measurable.
\end{proof}

We are now ready to establish existence of solutions to the model (\ref{eq:closed_loop_model_projected_random}).

\begin{theorem}\label{thm:first_stage_existence}
Suppose that Assumption~\ref{ass:existence_first_stage} holds and that the graph
of the multifunction
$(t,\omega)\mapsto\mathcal Y(t,\bm\xi_t(\omega))$
is measurable. For $t\in\mathcal T$, define
$\kappa_Y(t):=\bigl(\ell_F(t)+\int_0^t\ell_R(s)\,ds\bigr)/m_F$
and $\beta(t):=\ell_\Phi(t)\kappa_Y(t)$.
Assume further that
$\beta\in L^1(\mathcal T;\mathbb R_+)$.
Then, for every $\x_0\in\mathcal X$, system
\eqref{eq:closed_loop_model_projected_random} admits a unique solution
$(x,y)$, where $x\in AC(\mathcal T;\mathcal X)$ and
$y(t,\bm{\xi}_{\cdot\wedge t})
=\y\bigl(t,\bm{\xi}_{\cdot\wedge t},x(t)\bigr)$.
\end{theorem}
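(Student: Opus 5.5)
The plan is to reduce the DSVI to a Carathéodory ODE in $x$ alone. We substitute the unique second-stage response from Lemma~\ref{lem:second_stage_properties} and then apply a Lipschitz existence–uniqueness theorem. Define
\[
G(t,\x):=\mathbb E\bigl[\Phi\bigl(t,\bm\xi_t,\x,\y(t,\bm\xi_{\cdot\wedge t},\x)\bigr)\bigr],\qquad
H(t,\x):=\Pi_{\mathcal X}\bigl(\x-G(t,\x)\bigr)-\x .
\]
Any solution $(x,y)$ must have $y(t,\bm\xi_{\cdot\wedge t})=\y(t,\bm\xi_{\cdot\wedge t},x(t))$ a.s., by the uniqueness part of Lemma~\ref{lem:second_stage_properties}. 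Hence the system is equivalent to $\dot x=H(t,x)$ with $x(0)=\x_0$.

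\emph{Step 1: $G$ is well defined and measurable.} By the measurability part of Lemma~\ref{lem:second_stage_properties}, together with progressive measurability of $\bm\xi$ and continuity of $\Phi$, the integrand is jointly measurable in $(t,\omega,\x)$. For integrability, we use (A3) and $\y\in\overline{\mathcal Y}$:
\[
\|\Phi(t,\xi,\x,\y)\|\le b_\Phi(t,\xi)+\ell_\Phi(t)\bigl(\|\x\|+D\bigr),\qquad D:=\max_{\y\in\overline{\mathcal Y}}\|\y\|.
\]
So $\|G(t,\x)\|\le \mathbb E[b_\Phi(t,\bm\xi_t)]+\ell_\Phi(t)(\|\x\|+D)$, which is finite for a.e.\ $t$. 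Fubini then gives measurability of $t\mapsto G(t,\x)$.

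\emph{Step 2: Carathéodory Lipschitz and growth bounds.} Combining (A3) with the Lipschitz estimate of Lemma~\ref{lem:second_stage_properties} gives
\[
\|G(t,\x_1)-G(t,\x_2)\|\le \ell_\Phi(t)\bigl(1+\kappa_Y(t)\bigr)\|\x_1-\x_2\| .
\]
Nonexpansiveness of $\Pi_{\mathcal X}$ then yields
\[
\|H(t,\x_1)-H(t,\x_2)\|\le L(t)\|\x_1-\x_2\|,\qquad L:=2+\ell_\Phi+\beta\in L^1(\mathcal T).
\]
The hypothesis $\beta\in L^1$ is used exactly here. Similarly, $\|H(t,\x)\|\le L(t)\|\x\|+c(t)$ with $c\in L^1$: fix $\bar\x\in\mathcal X$ and bound $\|\Pi_{\mathcal X}(\x-G)-\bar\x\|$. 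Continuity of $H(t,\cdot)$ follows from the Lipschitz bound.

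\emph{Step 3: existence and uniqueness.} Apply the Carathéodory existence–uniqueness theorem, or equivalently Picard iteration in $\mathcal C_n$ with the weighted norm $\sup_t e^{-2\int_0^tL}\|x(t)\|$, where the integral map is a contraction. This gives a unique global $x\in AC(\mathcal T;\mathbb R^n)$; the linear growth bound and Gronwall exclude blow-up. Uniqueness of $(x,y)$ follows from uniqueness of $x$ together with the reduction above.

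\emph{Step 4: invariance of $\mathcal X$.} This is the main obstacle, since the Lipschitz argument does not see the constraint. Let $u(t):=\Pi_{\mathcal X}(x(t)-G(t,x(t)))\in\mathcal X$. The ODE reads $\dot x=u-x$, so by variation of constants
\[
x(t)=e^{-t}\x_0+\int_0^t e^{-(t-s)}u(s)\,ds .
\]
The weights $e^{-t}$ and $e^{-(t-s)}\,ds$ are positive and have total mass $1$. Thus $x(t)$ is a limit of convex combinations of points in the closed convex set $\mathcal X$, hence $x(t)\in\mathcal X$. To make this rigorous, we apply the supporting-hyperplane characterization: for any $\mathbf a$, $\langle \mathbf a,x(t)\rangle\le\sup_{\mathcal X}\langle\mathbf a,\cdot\rangle$. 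This shows $x\in AC(\mathcal T;\mathcal X)$.
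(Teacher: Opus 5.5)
Your proposal is correct and follows the paper's proof essentially step for step. You substitute the unique response from Lemma~\ref{lem:second_stage_properties}, obtain the integrable bound and the Lipschitz modulus $\ell_\Phi(1+\kappa_Y)$ for the expected drift from (A3), invoke the Carath\'eodory existence--uniqueness theorem, and recover $x(t)\in\mathcal X$ from the formula $x(t)=e^{-t}\x_0+\int_0^t e^{-(t-s)}u(s)\,ds$. Your explicit reduction argument for uniqueness of the pair $(x,y)$ and your support-function justification of invariance are slightly more careful versions of points the paper treats only briefly.
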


\begin{proof}
For a fixed $\x\in \mathbb{R}^n$, define
$
\hat y_{\x}(t,\omega)
:=
\y\bigl(t,\bm{\xi}_{\cdot\wedge t}(\omega),\x\bigr).
$
By Lemma~\ref{lem:second_stage_properties}, the mapping
$(t,\omega)\mapsto \hat y_{\x}(t,\omega)$
is measurable for each fixed $\x\in \mathbb{R}^n$. Moreover, since
$
\hat y_{\x}(t,\omega)
\in
\mathcal Y(t,\bm{\xi}_t(\omega))
\subseteq \overline{\mathcal Y},
$
we have
$
\|\hat y_{\x}(t,\omega)\|\le M_Y
= \sup_{\y\in\overline{\mathcal Y}}\|\y\|<\infty$, due to the compactness of 
$\overline{\mathcal Y}$.

We first verify that the expected-value mapping is well defined. By the
measurability of $\hat y_{\x}$, the progressive measurability of $\bm{\xi}$,
and the continuity of $\Phi$, the mapping
$
(t,\omega)
\mapsto
\Phi\bigl(t,\bm{\xi}_t(\omega),\x,\hat y_{\x}(t,\omega)\bigr)
$
is measurable.

Furthermore, by Assumption~\ref{ass:existence_first_stage}\textup{(A3)}, for
$t\in\mathcal T$ a.e. and every $\x\in \mathbb{R}^n$,
\[
\begin{aligned}
\left\|
\Phi\bigl(t,\bm{\xi}_t(\omega),\x,\hat y_{\x}(t,\omega)\bigr)
\right\|
&\le
\left\|
\Phi\bigl(t,\bm{\xi}_t(\omega),0,0\bigr)
\right\|
+
\ell_\Phi(t)
\bigl(
\|\x\|+\|\hat y_{\x}(t,\omega)\|
\bigr)\\
&\le
b_\Phi\bigl(t,\bm{\xi}_t(\omega)\bigr)
+
\ell_\Phi(t)
\bigl(
\|\x\|+M_Y
\bigr).
\end{aligned}
\]
Since
$
b_\Phi(t,\bm{\xi}_t)\in L^1(\Omega;\mathbb R_+)
$
for $t\in\mathcal T$ a.e., the integrand is $\mathbb P$-integrable for
$t$ a.e., hence the well-definedness of  the expected-value mapping
$
\bar\Phi(t,\x)
\!:=\!
\mathbb E\left[
\Phi\bigl(t,\bm{\xi}_t(\cdot),\x,\hat y_{\x}(t,\cdot)\bigr)
\right]
$.

Next, we show that $\bar\Phi$ is measurable in $t$ and Lipschitz continuous
in $\x$ with an integrable Lipschitz modulus. For each fixed $\x\in \mathbb{R}^n$, since
$
(t,\omega)\mapsto
\Phi\bigl(t,\bm{\xi}_t(\omega),\x,\hat y_{\x}(t,\omega)\bigr)
$
is measurable and $\mathbb P$-integrable for $t\in\mathcal T$ a.e., the
mapping
$
t\mapsto \bar\Phi(t,\x)
$
is measurable.
For $\x_1,\x_2\in \mathbb R^n$, by Assumption~\ref{ass:existence_first_stage}
\textup{(A3)} and Lemma~\ref{lem:second_stage_properties}, for a.e.
$t\in\mathcal T$,
\begin{align}
&\|\bar\Phi(t,\x_1)-\bar\Phi(t,\x_2)\|
\le
\mathbb E\left[
\left\|
\Phi\bigl(t,\bm{\xi}_t(\cdot),\x_1,\hat y_{\x_1}(t,\cdot)\bigr)
-
\Phi\bigl(t,\bm{\xi}_t(\cdot),\x_2,\hat y_{\x_2}(t,\cdot)\bigr)
\right\|
\right] \nonumber \\
&\le
\ell_\Phi(t)
\left(
\|\x_1-\x_2\|
+
\mathbb E\left[
\|\hat y_{\x_1}(t,\cdot)-\hat y_{\x_2}(t,\cdot)\|
\right]
\right)\le
\ell_\Phi(t)\bigl(1+\kappa_Y(t)\bigr)
\|\x_1-\x_2\|.
\label{bar_phi_estimate}
\end{align}
Since
$
\ell_\Phi$ and $\beta\in L^1(\mathcal T;\mathbb R_+),
$
the mapping $\bar\Phi(t,\cdot)$ is Lipschitz continuous with an integrable
Lipschitz modulus.
Define, for $(t,\x)\in\mathcal T\times\mathbb R^n$,
\begin{align}
H(t,\x)
:=
\Pi_{\mathcal X}\bigl(\x-\bar\Phi(t,\x)\bigr)-\x .\label{eq:H_t_x}
\end{align}
Since the Euclidean projection onto a nonempty closed convex set is
nonexpansive, for a.e. $t\in\mathcal T$ and all
$\x_1,\x_2\in\mathbb R^n$, it follows that
{\begin{align}
&\|H(t,\x_1)\!-\!H(t,\x_2)\|
\!\le\!
\left\|
\Pi_{\mathcal X}\bigl(\x_1\!-\!\bar\Phi(t,\x_1)\bigr)
\!-\!
\Pi_{\mathcal X}\bigl(\x_2\!-\!\bar\Phi(t,\x_2)\bigr)
\right\|
+
\|\x_1-\x_2\| \notag \\
&\le
2\|\x_1-\x_2\|
+
\|\bar\Phi(t,\x_1)-\bar\Phi(t,\x_2)\|
\le
\bigl(2+\ell_\Phi(t)(1+\kappa_Y(t))\bigr)
\|\x_1-\x_2\|.
\label{eq:linear_growth_H_t_x}
\end{align}}
Hence $H(t,\cdot)$ is Lipschitz continuous with an integrable Lipschitz
modulus.

Then, since $\x_0\in {\mathcal X}$, for $\x\in\mathbb R^n$ we have
\[
\begin{aligned}
\|H(t,\x)\|
&=
\left\|
\Pi_{\mathcal X}\bigl(\x-\bar\Phi(t,\x)\bigr)-\x
\right\|                                                   \le
\left\|
\Pi_{\mathcal X}\bigl(\x-\bar\Phi(t,\x)\bigr)-\Pi_{\mathcal X}(\x)
\right\|
+
\left\|
\Pi_{\mathcal X}(\x)-\x
\right\|                                                   \\
&\le
\|\bar\Phi(t,\x)\|
+
\operatorname{dist}(\x,{\mathcal X})                                  \le
\mathbb E\bigl[b_\Phi(t,\bm\xi_t)\bigr]
+
\ell_\Phi(t)(\|\x\|+M_Y)
+
\|\x-\x_0\|                                                \\
&\le
\bigl(1+\ell_\Phi(t)\bigr)\|\x\|
+
\mathbb E\bigl[b_\Phi(t,\bm\xi_t)\bigr]
+
\ell_\Phi(t)M_Y
+
\|\x_0\|.
\end{aligned}
\]
Noting that
$
t\mapsto\mathbb E[b_\Phi(t,\bm\xi_t)]
$
and $\ell_\Phi$ belong to $L^1(\mathcal T;\mathbb R_+)$, this gives a
linear growth bound of \eqref{eq:H_t_x} in $\x$ with integrable coefficients.
Therefore, by the classical existence and uniqueness theorem for
Carath\'eodory ordinary differential equations; see
\cite[Theorem~2.17 and the following remark]{teschl2012ordinary}, the initial
value problem
\begin{align}
\dot x(t)=H(t,x(t)),
\quad
x(0)=\x_0,\label{Expected_system}
\end{align}
admits a unique absolutely continuous solution on $\mathcal T$, still denoted
by $x$.

It remains to show that $x(t)\in {\mathcal X}$ for all $t\in\mathcal T$. Let
$
z(t):=\Pi_{\mathcal X}\bigl(x(t)-\bar\Phi(t,x(t))\bigr).
$
Then $z(t)\in {\mathcal X}$ for a.e. $t$, and
$
\dot x(t)+x(t)=z(t).
$
Hence
$
x(t)
=
e^{-t}\x_0
+
\int_0^t e^{-(t-s)}z(s)\,ds .
$
Since $\x_0\in {\mathcal X}$, $z(s)\in {\mathcal X}$ for a.e. $s$, and
$
e^{-t}+\int_0^t e^{-(t-s)}\,ds=1,
$
the vector $x(t)$ is the limit of finite convex combinations of points in
${\mathcal X}$. By the closedness and convexity of ${\mathcal X}$, this limit also belongs to ${\mathcal X}$.
Therefore $x(t)\in {\mathcal X}$ for all $t\in\mathcal T$.

Finally, by Lemma~\ref{lem:second_stage_properties},
$
y(t,\bm{\xi}_{\cdot\wedge t})
:=
\y\bigl(t,\bm{\xi}_{\cdot\wedge t},x(t)\bigr)
$ satisfies the
second-stage SVI  \eqref{eq:closed_loop_model_projected_random_second}. Moreover, by
the definition of $\bar\Phi$, the state trajectory $x$ satisfies the projected
expected-value state equation. Hence $(x,y)$ satisfies
\eqref{eq:closed_loop_model_projected_random}.
\end{proof}

\section{Sample Average Approximation of the\\ History-Dependent DSVI}
\label{sec:SAA_HD_DSVI}
We next consider a sample average approximation (SAA) of the expectation
appearing in the first-stage dynamics
\eqref{eq:closed_loop_model_projected_random_first}. We first
rewrite the expectation as an integral over the history paths of the
exogenous process, as suggested in the Introduction.

To this end, first define 
$\mathfrak{M}_t(\Xi):=\{\gamma:[0,t]\to\Xi \mid \gamma
\text{ is measurable}\}$  as the space of all measurable paths from $[0,t]$ into the Borel set $\Xi$,   equipped with an appropriate \(\sigma\)-algebra \(\mathcal M_t\).
Such \(\mathcal M_t\) ensures that endpoint map \(\gamma\mapsto\gamma(t)\) is
\(\mathcal M_t/\mathcal B(\Xi)\)-measurable in the sense that
\(\{\gamma\in\mathfrak M_t(\Xi):\gamma(t)\in B\}\in\mathcal M_t\) for every
$B$ in the  Borel \(\sigma\)-algebra on  \(\Xi\) denoted by $\mathcal B(\Xi)$, and the   history process
\(\bm{\xi}_{\cdot\wedge t}:\Omega\to\mathfrak M_t(\Xi)\) is
\(\mathcal F_t/\mathcal M_t\)-measurable in the sense that
\(\{\omega\in\Omega:\bm{\xi}_{\cdot\wedge t}(\omega)\in C\}\in\mathcal F_t\)
for every \(C\in\mathcal M_t\).  
For each $\gamma\in\mathfrak M_t(\Xi)$ and $\x\in\mathcal X$, let
$\y(t,\gamma,\x)$ denote the unique solution of
\eqref{eq:second_VI}, and define
$G(t,\x,\gamma):=
\Phi\bigl(t,\gamma(t),\x,\y(t,\gamma,\x)\bigr)$.
Let
$\mu_t^{\bm\xi}:=
\mathbb P\circ(\bm\xi_{\cdot\wedge t})^{-1}$
be the law of $\bm\xi_{\cdot\wedge t}$ on
$(\mathfrak M_t(\Xi),\mathcal M_t)$.

Now, suppose that, for
every $(\x,\y)\in\mathcal X\times\mathbb R^m$, the map
$\gamma\mapsto\int_0^t R(s,\gamma(s),\x,\y)\,ds$ is
$\mathcal M_t/\mathcal B(\mathbb R^m)$-measurable. Then, by the same
measurable-selection argument as in
Lemma~\ref{lem:second_stage_properties}, the second-stage solution map
$(\gamma,\x)\mapsto\y(t,\gamma,\x)$ is
$\mathcal M_t\otimes\mathcal B(\mathcal X)/
\mathcal B(\mathbb R^m)$-measurable. Therefore, the map
$(\gamma,\x)\mapsto G(t,\x,\gamma)$ is
$\mathcal M_t\otimes\mathcal B(\mathcal X)/
\mathcal B(\mathbb R^n)$-measurable. If, in addition,
$\gamma\mapsto G(t,\x,\gamma)$ is $\mu_t^{\bm\xi}$-integrable, then the
change-of-variables formula for image measures
\cite[Theorem~3.6.1]{Bogachev2007MeasureTheory} yields
\begin{equation}\label{bar_Phi_def}
\begin{aligned}
\bar\Phi(t,\x)
:=
\mathbb E\Bigl[
\Phi\bigl(
t,\bm\xi_t,\x,
\y(t,\bm\xi_{\cdot\wedge t},\x)
\bigr)
\Bigr]=
\int_{\mathfrak M_t(\Xi)}
G(t,\x,\gamma)
\,\mu_t^{\bm\xi}(d\gamma).
\end{aligned}
\end{equation}

Since the integral in \eqref{bar_Phi_def} is generally unavailable in
applications, we approximate $\bar\Phi$ by an empirical average based on
independent realizations of the exogenous process, while the second-stage
response is still computed pathwise through
\eqref{eq:closed_loop_model_projected_random_second}.

Let \(\{\bm\xi^i\}_{i\ge1}\) be i.i.d. copies of \(\bm\xi\). For each
\(N\ge1\), the SAA of \(\bar\Phi\) is defined by
\begin{equation}
    \label{eq:SAA_of_Phi}
\bar\Phi_N(t,\x)
:=
\frac1N
\sum_{i=1}^N
G\bigl(t,\x,\bm\xi^i_{\cdot\wedge t}\bigr)
=
\frac1N
\sum_{i=1}^N
\Phi\bigl(
t,\bm\xi_t^i,\x,
\y(t,\bm\xi^i_{\cdot\wedge t},\x)
\bigr).
\end{equation}

Let $x$ denote the solution of the expected-value ODE
\eqref{Expected_system}, its SAA counterpart is obtained by replacing
$\bar\Phi$ with $\bar\Phi_N$ such that
\begin{equation}\label{eq:SAA_state_equation}
\left\{
\begin{aligned}
\dot x_N(t)
&=
\Pi_{\mathcal X}\Bigl(
x_N(t)
-
\bar\Phi_N\bigl(t,x_N(t)\bigr)
\Bigr)
-
x_N(t),
\quad t\in\mathcal T,
\\
x_N(0)&=x_0.
\end{aligned}
\right.
\end{equation}
Here, \(x_N\) denotes the solution of the SAA system
\eqref{eq:SAA_state_equation}.

{

\begin{remark}

The above measurable history-space formulation is compatible with standard canonical path spaces. If the exogenous process has continuous sample paths and $\Xi$ is closed, one may take $\mathfrak M_t(\Xi)=C([0,t];\Xi)$ and $\mathcal M_t=\mathcal B(C([0,t];\Xi))$, where $C([0,t];\Xi)$ is equipped with the uniform norm. Since $(s,\gamma)\mapsto\gamma(s)$ and $R$ are continuous, $(s,\gamma)\mapsto R(s,\gamma(s),\x,\y)$ is jointly Borel measurable. Together with the integrable bound in Assumption~\ref{ass:existence_first_stage}\textup{(A1)}, this implies that $\gamma\mapsto\int_0^t R(s,\gamma(s),\x,\y)\,ds$ is Borel measurable.

If the exogenous process has c\`adl\`ag sample paths, one may instead take
$\mathfrak M_t(\Xi)=D([0,t];\Xi)$ and
$\mathcal M_t=\mathcal B(D([0,t];\Xi))$, where $D([0,t];\Xi)$ is endowed
with the Skorokhod $J_1$ topology. In this case, the evaluation map
$(s,\gamma)\mapsto\gamma(s)$ is jointly Borel measurable. Hence, by the
continuity of $R$ and Assumption~\ref{ass:existence_first_stage}\textup{(A1)},
the same conclusion follows. For standard facts on the Skorokhod space, see
\cite[Chapter~3]{billingsley1999convergence}.
\end{remark}
}

For the SAA convergence analysis, we assume that \(\mathcal X\) is compact. We first prove the convergence of
the SAA mapping \(\bar\Phi_N\) to \(\bar\Phi\), and then use this approximation
result to establish the convergence of the SAA state trajectory \(x_N\) to the
original trajectory \(x\). Since \(\mathcal X\) is compact, set
\(M_X:=\sup_{\x\in\mathcal X}\|\x\|<\infty\).

\begin{lemma}\label{lem:SAA_drift_as_convergence}
Suppose that the conditions of
Theorem~\ref{thm:first_stage_existence} hold and that \(\mathcal X\) is
compact. Then
$
\int_0^T \sup_{\x\in\mathcal X}
\left\|
\bar\Phi_N(t,\x)
-
\bar\Phi(t,\x)
\right\|\,dt
\rightarrow 0
\  \text{\rm w.p.1\ as}\  N\to\infty .
$
\end{lemma}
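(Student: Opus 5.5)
The plan is to prove the result in two stages. First, for a.e. fixed $t$, I will show that $\sup_{\x\in\mathcal X}\|\bar\Phi_N(t,\x)-\bar\Phi(t,\x)\|\to0$ w.p.1, which is a uniform strong law of large numbers on the compact set $\mathcal X$. Second, I will pass this pointwise-in-$t$ convergence through the time integral by dominated convergence, applied in the product space $\mathcal T\times\Omega^\infty$ and combined with Fubini.

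\emph{Step 1 (pointwise-in-$t$ uniform SLLN).} Fix $t$ outside the null set where (A3) may fail. The random function $\x\mapsto G(t,\x,\bm\xi^i_{\cdot\wedge t})$ is Lipschitz on $\mathcal X$, with the deterministic constant $L(t):=\ell_\Phi(t)(1+\kappa_Y(t))$; this follows from (A3) together with Lemma~\ref{lem:second_stage_properties}, exactly as in \eqref{bar_phi_estimate}. It is dominated by $D(t,\omega):=b_\Phi(t,\bm\xi_t)+\ell_\Phi(t)(M_X+M_Y)$, which is integrable in $\omega$. I will take a countable dense set $\{\x_k\}\subset\mathcal X$. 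The classical SLLN gives $\bar\Phi_N(t,\x_k)\to\bar\Phi(t,\x_k)$ for all $k$ off one null set. For any $\varepsilon>0$, pick a finite $\varepsilon$-net among the $\x_k$. The equi-Lipschitz bound then gives
\[
\sup_{\x\in\mathcal X}\|\bar\Phi_N(t,\x)-\bar\Phi(t,\x)\|\le \max_{k\le K}\|\bar\Phi_N(t,\x_k)-\bar\Phi(t,\x_k)\|+2L(t)\varepsilon ,
\]
so the supremum tends to zero w.p.1 for this $t$. Alternatively, one can cite the uniform law of large numbers for Carath\'eodory integrands with an integrable envelope (e.g.\ Shapiro--Dentcheva--Ruszczy\'nski, Thm.~7.48). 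I will also note that the supremum equals the supremum over the $\x_k$ by continuity, so it is jointly measurable in $(t,\omega)$.

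\emph{Step 2 (integration in time).} Let $h_N(t,\omega):=\sup_{\x}\|\bar\Phi_N(t,\x)-\bar\Phi(t,\x)\|$. It satisfies
\[
h_N(t,\omega)\le \frac1N\sum_{i=1}^N b_\Phi(t,\bm\xi^i_t)+\mathbb E[b_\Phi(t,\bm\xi_t)]+2\ell_\Phi(t)(M_X+M_Y).
\]
By Step 1 and Fubini, for a.e.\ $\omega$ we have $h_N(t,\omega)\to0$ for a.e.\ $t$. This is because the set of $(t,\omega)$ where convergence fails is measurable and has null $\omega$-sections for a.e.\ $t$, hence null $t$-sections for a.e.\ $\omega$.

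\textbf{Main obstacle.} The dominating function is random and depends on $N$, so ordinary dominated convergence does not apply directly. I will use the generalized (Pratt) dominated convergence theorem instead. This requires showing that, w.p.1,
\[
\int_0^T\frac1N\sum_{i=1}^N b_\Phi(t,\bm\xi^i_t)\,dt\to\int_0^T\mathbb E[b_\Phi(t,\bm\xi_t)]\,dt,
\]
and that $\frac1N\sum_i b_\Phi(t,\bm\xi^i_t)\to\mathbb E b_\Phi(t,\bm\xi_t)$ for a.e.\ $t$ w.p.1. The first follows from the scalar SLLN applied to the i.i.d.\ integrable variables $Z_i:=\int_0^T b_\Phi(t,\bm\xi^i_t)dt$, whose mean is finite by Tonelli and (A3). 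The second follows from the SLLN for each $t$ together with the same Fubini argument. Pratt's lemma then yields $\int_0^T h_N\,dt\to0$ w.p.1, which is the claim.
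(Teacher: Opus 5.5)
Your proof is correct. Its first two steps match the paper's, and the route differs in how the time integral is handled.

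\textbf{Steps 1 and 2.} The paper also uses the deterministic Lipschitz modulus $L(t)=\ell_\Phi(t)(1+\kappa_Y(t))$ to get a uniform strong law at a.e.\ fixed $t$. It phrases this as finite $\varepsilon$-bracketing plus the Glivenko--Cantelli theorem of van der Vaart--Wellner, which is the same net argument you carry out directly with a countable dense set. It then uses the same product-measure/Fubini argument to obtain, for a.s.\ $\omega$, convergence for a.e.\ $t$.

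\textbf{Passing the limit through $\int_0^T$.} The paper truncates. It sets $G^K=G\mathbf 1_{\{J\le K\}}$ with envelope $J(t,\gamma)=b_\Phi(t,\gamma(t))+\ell_\Phi(t)(M_X+M_Y)$. It reruns the uniform strong law for $G^K$, so that the truncated discrepancy $D_N^K\le 2K$ can be integrated by ordinary dominated convergence. It controls the remainder by the SLLN applied to the integrated tails $\int_0^T J\mathbf 1_{\{J>K\}}\,dt$, which gives $\limsup_N\int_0^T D_N\,dt\le 2\int_0^T\mathbb E[J\mathbf 1_{\{J>K\}}]\,dt$. Finally it lets $K\to\infty$.

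You instead keep the random, $N$-dependent envelope $g_N$ and apply Pratt's lemma, which is Fatou applied to $g_N-h_N\ge 0$. This needs only two ingredients: the scalar SLLN for the i.i.d.\ variables $Z_i=\int_0^T b_\Phi(t,\bm\xi^i_t)\,dt$, and a.e.-in-$t$ convergence of $g_N$, which costs one more Fubini step.

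\textbf{Trade-off.} Your route avoids the double limit in $N$ and $K$ and the second pass of the uniform strong law for each truncation level, at the price of invoking the generalized dominated convergence theorem. The paper's truncation uses only the standard dominated convergence theorem plus a countable family of scalar SLLNs.

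\textbf{Shared measurability assumption.} Both arguments implicitly need $(t,\omega)\mapsto b_\Phi(t,\bm\xi_t(\omega))$ to be jointly measurable for $Z_i$, or the paper's tail integrals, to be random variables. You can sidestep this by using the envelope $\|\Phi(t,\bm\xi_t,0,0)\|\le b_\Phi(t,\bm\xi_t)$. It is jointly measurable by progressive measurability of $\bm\xi$ and continuity of $\Phi$.
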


\begin{proof}
Fix $t\in\mathcal T$. Note that
$\gamma\mapsto G(t,\x,\gamma)$ is measurable for each $\x\in {\mathcal X}$. Moreover,
by Lemma~\ref{lem:second_stage_properties} and
Assumption~\ref{ass:existence_first_stage}\textup{(A3)}, for all
$\x_1,\x_2\in {\mathcal X}$,
\[
\begin{aligned}
\|G(t,\x_1,\gamma)-G(t,\x_2,\gamma)\|
&\le
\ell_\Phi(t)
\bigl(
\|\x_1-\x_2\|
+
\|\y(t,\gamma,\x_1)-\y(t,\gamma,\x_2)\|
\bigr)  \\
&\le
\ell_\Phi(t)(1+\kappa_Y(t))\|\x_1-\x_2\|.
\end{aligned}
\]
Set
$
L(t):=\ell_\Phi(t)(1+\kappa_Y(t)).
$
Then $L\in L^1(\mathcal T;\mathbb R_+)$ by assumption.
Since
$
\y(t,\gamma,\x)\in\mathcal Y(t,\gamma(t))\subseteq\overline{\mathcal Y},
$
Assumption~\ref{ass:existence_first_stage}\textup{(A3)} gives
$
\sup_{\x\in {\mathcal X}}\|G(t,\x,\gamma)\|
\le
J(t,\gamma)
$
with
\[
J(t,\gamma)
:=
b_\Phi(t,\gamma(t))
+
\ell_\Phi(t)(M_X+M_Y).
\]
For a.e. $t\in\mathcal T$, it then follows that
\[
\int_{\mathfrak M_t(\Xi)}
J(t,\gamma)\,\mu_t^{\bm\xi}(d\gamma)
=
\mathbb E\bigl[b_\Phi(t,\bm\xi_t)\bigr]
+
\ell_\Phi(t)(M_X+M_Y)
<\infty .
\]

We now verify the bracketing condition in
\cite[Definition~2.1.6 and Theorem~2.4.1]{vanderVaartWellner1996}
for each component of $G$. Let $G_r$ be the $r$-th component of $G$,
$r=1,\ldots,n$, and define
$
\mathcal F_{t,r}
:=
\{\,\gamma\mapsto G_r(t,\x,\gamma):\x\in {\mathcal X}\,\}.
$
For any $\varepsilon>0$, the compactness of ${\mathcal X}$ gives points
$\x_1,\ldots,\x_M\in {\mathcal X}$ such that every $\x\in {\mathcal X}$ satisfies
$
\|\x-\x_j\|\le \varepsilon/(2L(t))
$
for some $j$, with the convention that the claim is trivial if $L(t)=0$.
Hence
$
|G_r(t,\x,\gamma)-G_r(t,\x_j,\gamma)|
\le
\frac{\varepsilon}{2}$ for
$\gamma\in\mathfrak M_t(\Xi).
$
Therefore $G_r(t,\x,\cdot)$ is contained in the bracket
$
\left[
G_r(t,\x_j,\cdot)-\frac{\varepsilon}{2},
\,
G_r(t,\x_j,\cdot)+\frac{\varepsilon}{2}
\right],
$
whose $L^1(\mu_t^{\bm\xi})$-width is equal to $\varepsilon$. Here the \(L^1(\mu_t^{\bm\xi})\)-width of a bracket \([l,u]\) means
$
\|u-l\|_{L^1(\mu_t^{\bm\xi})}
:=
\int_{\mathfrak M_t(\Xi)}
|u(\gamma)-l(\gamma)|\,\mu_t^{\bm\xi}(d\gamma).
$ 
Thus
$\mathcal F_{t,r}$ admits finitely many brackets of arbitrary positive width.
By the Glivenko--Cantelli theorem
\cite[Theorem~2.4.1]{vanderVaartWellner1996}, applied on
$(\mathfrak M_t(\Xi),\mathcal M_t,\mu_t^{\bm\xi})$, we obtain
\[
\sup_{\x\in {\mathcal X}}
\left|
\frac1N\sum_{i=1}^N
G_r(t,\x,\bm\xi^i_{\cdot\wedge t})
-
\mathbb E\bigl[
G_r(t,\x,\bm\xi_{\cdot\wedge t})
\bigr]
\right|
\longrightarrow 0
\quad \text{\rm w.p.1\ as}\  N\to\infty
\]
for each $r=1,\ldots,n$ and for a.e. $t\in\mathcal T$. Since $n$ is finite,
this implies that
\[
\sup_{\x\in\mathcal X}
\left\|
\bar\Phi_N(t,\x)
-
\bar\Phi(t,\x)
\right\|\to 0
\  \text{\rm w.p.1\ as}\ N\to\infty
\]
for a.e. $t\in\mathcal T$.
For \(t\in\mathcal T\), define
\begin{align}
D_N(t)
:=
\sup_{\x\in\mathcal X}
\left\|
\bar\Phi_N(t,\x)
-
\bar\Phi(t,\x)
\right\|.\label{eq:D_N_t}
\end{align}
To prove Lemma~\ref{lem:SAA_drift_as_convergence}, it suffices to show that for
any $\varepsilon>0$ and $\text{a.s. }$ $\omega\in\Omega$, there exists
$N^*=N^*(\varepsilon,\omega)$ such that  for all
$N\ge N^*$, 
$
\int_0^T D_N(t,\omega)\,dt
\le \varepsilon.
$
Here $D_N(t)=D_N(t,\omega)$ is a random variable depending on $\omega$, and we  next prove its almost sure convergence.

Since ${\mathcal X}$ is compact and
$\x\mapsto \bar\Phi_N(t,\x)-\bar\Phi(t,\x)$ is continuous, there exists
a countable dense subset $\{\x_j\}_{j\ge1}\subset {\mathcal X}$ such that
$
D_N(t)
=
\sup_{j\ge1}
\|
\bar\Phi_N(t,\x_j)
-
\bar\Phi(t,\x_j)
\|.
$
Hence $(t,\omega)\mapsto D_N(t,\omega)$ is measurable and the set
$
E:=
\{(t,\omega)\in\mathcal T\times\Omega:
D_N(t,\omega)\not\to0\}
$
belongs to $\mathcal B(\mathcal T)\otimes\mathcal F$.
Let $\lambda$ denote the Lebesgue measure on $\mathcal T$. By
\cite[Theorem~2.36]{folland1999real}, we have
$
(\lambda\times\mathbb P)(E)
=
\int_0^T \mathbb P(E_t)\,dt
=
\int_\Omega \lambda(E^\omega)\,d\mathbb P(\omega),
$
where
$
E_t:=\{\omega\in\Omega:D_N(t,\omega)\not\to0\},
$ and $
E^\omega:=\{t\in\mathcal T:D_N(t,\omega)\not\to0\}.
$
Since $\mathbb P(E_t)=0$ for every fixed $t\in\mathcal T$, it follows that
$
(\lambda\times\mathbb P)(E)=0.
$
Consequently,
$
\lambda(E^\omega)=0
\ \text{for }\text{a.s. } \omega .
$
Thus there exists $\Omega_0\in\mathcal F$ with $\mathbb P(\Omega_0)=1$ such
that, for every $\omega\in\Omega_0$,
$
D_N(t,\omega)\to0
\  \text{as } N\to\infty
\  \text{for a.e. } t\in\mathcal T .
$
Then,
we prove the convergence of $
\int_0^T D_N(t)\,dt.
$ For $K>0$, define
$
G^K(t,\x,\gamma)
:=
G(t,\x,\gamma)\mathbf 1_{\{J(t,\gamma)\le K\}},
$
and
\[
D_N^K(t)
:=
\sup_{\x\in {\mathcal X}}
\left\|
\frac1N\sum_{i=1}^N
G^K(t,\x,\bm\xi^i_{\cdot\wedge t})
-
\mathbb E\bigl[
G^K(t,\x,\bm\xi_{\cdot\wedge t})
\bigr]
\right\|.
\]
For fixed $K$, applying the preceding argument to $G^K$ gives
$
D_N^K(t)\to0
$
$\text{\rm w.p.1\ as}\  N\to\infty$  for a.e. $t$, and
$
0\le D_N^K(t)\le 2K.
$
Hence, by the dominated convergence theorem \cite[Theorem~2.24]{folland1999real},
$
\int_0^T D_N^K(t)\,dt
\to 0\
\text{\rm w.p.1\ as}\  N\to\infty.
$
Moreover, for every $K>0$,
\[
D_N(t)
\le
D_N^K(t)
+
\frac1N\sum_{i=1}^N
J(t,\bm\xi^i_{\cdot\wedge t})
\mathbf 1_{\{J(t,\bm\xi^i_{\cdot\wedge t})>K\}}
+
\mathbb E\left[
J(t,\bm\xi_{\cdot\wedge t})
\mathbf 1_{\{J(t,\bm\xi_{\cdot\wedge t})>K\}}
\right].
\]
Integrating over $\mathcal T$ yields
\[
\begin{aligned}
\int_0^T D_N(t)\,dt
&\le
\int_0^T D_N^K(t)\,dt+
\frac1N\sum_{i=1}^N
\int_0^T
J(t,\bm\xi^i_{\cdot\wedge t})
\mathbf 1_{\{J(t,\bm\xi^i_{\cdot\wedge t})>K\}}\,dt\\
&\quad +
\int_0^T
\mathbb E\left[
J(t,\bm\xi_{\cdot\wedge t})
\mathbf 1_{\{J(t,\bm\xi_{\cdot\wedge t})>K\}}
\right]dt .
\end{aligned}
\]
By the strong law of large numbers, as $N\to \infty$,
\[
\frac1N\sum_{i=1}^N
\int_0^T
J(t,\bm\xi^i_{\cdot\wedge t})
\mathbf 1_{\{J(t,\bm\xi^i_{\cdot\wedge t})>K\}}\,dt
\to
\int_0^T
\mathbb E\left[
J(t,\bm\xi_{\cdot\wedge t})
\mathbf 1_{\{J(t,\bm\xi_{\cdot\wedge t})>K\}}
\right]dt\ \text{\rm w.p.1}.
\]
Therefore,
$
\limsup_{N\to\infty}
\int_0^T D_N(t)\,dt
\le
2
\int_0^T
\mathbb E\left[
J(t,\bm\xi_{\cdot\wedge t})
\mathbf 1_{\{J(t,\bm\xi_{\cdot\wedge t})>K\}}
\right]dt
\ \text{\rm a.s.}
$
Finally, since
\[
\int_0^T
\mathbb E\bigl[
J(t,\bm\xi_{\cdot\wedge t})
\bigr]dt
<\infty,
\]
the dominated convergence theorem \cite[Theorem~2.24]{folland1999real} gives
$
\int_0^T
\mathbb E\left[
J(t,\bm\xi_{\cdot\wedge t})
\mathbf 1_{\{J(t,\bm\xi_{\cdot\wedge t})>K\}}
\right]dt
\to 0$ as $K\to\infty.
$
\end{proof}

\begin{theorem}\label{thm:SAA_convergence}
Suppose that the assumptions of Theorem~\ref{thm:first_stage_existence} hold.
Then, for each $N$, the SAA ODE
\eqref{eq:SAA_state_equation} admits a unique solution
$
x_N\in AC(\mathcal T;{\mathcal X}).
$
Moreover,
$$
\sup_{t\in\mathcal T}
\|x_N(t)-x(t)\|
\rightarrow 0
\ \text{\rm w.p.1\ as}\  N\to\infty.
$$
\end{theorem}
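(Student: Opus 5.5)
We fix a sample realization $\omega$ and treat \eqref{eq:SAA_state_equation} as a deterministic Carath\'eodory ODE, mirroring the proof of Theorem~\ref{thm:first_stage_existence}. Set $H_N(t,\x):=\Pi_{\mathcal X}(\x-\bar\Phi_N(t,\x))-\x$.

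\textbf{Well-posedness of the SAA ODE.} Measurability of $t\mapsto\bar\Phi_N(t,\x)$ follows from the measurability of $(t,\omega,\x)\mapsto\y(t,\bm\xi^i_{\cdot\wedge t}(\omega),\x)$ in Lemma~\ref{lem:second_stage_properties}, applied to each copy $\bm\xi^i$. For Lipschitz continuity, Lemma~\ref{lem:second_stage_properties} and (A3) give, pathwise for each $i$,
\[
\|G(t,\x_1,\bm\xi^i_{\cdot\wedge t})-G(t,\x_2,\bm\xi^i_{\cdot\wedge t})\|\le L(t)\|\x_1-\x_2\|,
\qquad L(t)=\ell_\Phi(t)(1+\kappa_Y(t)).
\]
Averaging over $i$, $\bar\Phi_N(t,\cdot)$ inherits the same integrable modulus $L$, and nonexpansiveness of $\Pi_{\mathcal X}$ gives $H_N(t,\cdot)$ the modulus $2+L(t)$. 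For growth, we use $\|H_N(t,\x)\|\le \|\bar\Phi_N(t,\x)\|+\|\x-\x_0\|$. We then need $t\mapsto \frac1N\sum_i b_\Phi(t,\bm\xi^i_t)$ to be integrable on $\mathcal T$ almost surely. This holds because, by Tonelli, its expectation equals $\int_0^T\mathbb E[b_\Phi(t,\bm\xi_t)]\,dt<\infty$, so it is finite for a.e.\ $\omega$. Discarding this null set, Carath\'eodory existence and uniqueness \cite[Theorem~2.17]{teschl2012ordinary} yields a unique $x_N\in AC(\mathcal T;\mathbb R^n)$. The variation-of-constants representation $x_N(t)=e^{-t}\x_0+\int_0^te^{-(t-s)}z_N(s)\,ds$, with $z_N(s)\in\mathcal X$, then gives $x_N(t)\in\mathcal X$ exactly as before.

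\textbf{Convergence.} Both $x$ and $x_N$ take values in the compact set $\mathcal X$. Subtracting the integral forms of the two ODEs and using nonexpansiveness of the projection,
\[
\|x_N(t)-x(t)\|\le\int_0^t\Bigl(2\|x_N(s)-x(s)\|+\|\bar\Phi_N(s,x_N(s))-\bar\Phi(s,x(s))\|\Bigr)ds .
\]
We split the last term as
\[
\|\bar\Phi_N(s,x_N(s))-\bar\Phi(s,x_N(s))\|+\|\bar\Phi(s,x_N(s))-\bar\Phi(s,x(s))\|.
\]
The first piece is at most $D_N(s)$ from \eqref{eq:D_N_t}, since $x_N(s)\in\mathcal X$. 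The second is at most $L(s)\|x_N(s)-x(s)\|$ by \eqref{bar_phi_estimate}. Hence
\[
\|x_N(t)-x(t)\|\le\int_0^TD_N(s)\,ds+\int_0^t(2+L(s))\|x_N(s)-x(s)\|\,ds.
\]
Gronwall's inequality with the integrable kernel $2+L$ then gives
\[
\sup_{t\in\mathcal T}\|x_N(t)-x(t)\|\le \exp\Bigl(\int_0^T(2+L(s))\,ds\Bigr)\int_0^TD_N(s)\,ds .
\]
Lemma~\ref{lem:SAA_drift_as_convergence} makes the right-hand side tend to $0$ w.p.1.

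\textbf{Main obstacle.} The delicate point is bookkeeping of null sets. We need a single full-measure event on which three things hold simultaneously: the SAA drift satisfies the Carath\'eodory conditions for every $N$ (a countable union of null sets); the conclusion of Lemma~\ref{lem:SAA_drift_as_convergence} holds; and the evaluation of $D_N$ along the random trajectory $x_N$ is legitimate. The last point is automatic once $x_N(s)\in\mathcal X$ is established, because $D_N$ is a supremum over all of $\mathcal X$. For this reason we establish invariance of $\mathcal X$ before the Gronwall step.
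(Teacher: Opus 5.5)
Your proposal is correct and follows the paper's proof essentially step for step: you split the drift difference into $D_N(s)$ plus the Lipschitz term $\ell_\Phi(s)(1+\kappa_Y(s))\|x_N(s)-x(s)\|$, apply Gronwall with the same constant $\exp\bigl(\int_0^T(2+\ell_\Phi(s)(1+\kappa_Y(s)))\,ds\bigr)$, and conclude with Lemma~\ref{lem:SAA_drift_as_convergence}. Your additional points (almost-sure integrability of the empirical envelope $\frac1N\sum_i b_\Phi(t,\bm\xi^i_t)$, null-set bookkeeping over $N$, and proving $x_N(t)\in\mathcal X$ before using $D_N$) only make explicit what the paper compresses into ``by the same argument as in Theorem~\ref{thm:first_stage_existence}''; like the paper, you rely on the standing assumption in this section that $\mathcal X$ is compact.
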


\begin{proof}
For each $N$, define
$
H_N(t,\x)
:=
\Pi_{\mathcal X}\bigl(
\x-\bar\Phi_N(t,\x)
\bigr)-\x .
$
By the same argument as in Theorem~\ref{thm:first_stage_existence},
\eqref{eq:SAA_state_equation} admits a unique solution
$
x_N\in AC(\mathcal T;{\mathcal X}).
$

{Recalling \eqref{eq:H_t_x} and
since $\Pi_{\mathcal X}$ is nonexpansive, we have
$
\|H_N(t,\x)-H(t,\x)\|
\le
\|\bar\Phi_N(t,\x)-\bar\Phi(t,\x)\|.
$}
Therefore, by \eqref{eq:linear_growth_H_t_x} and noting \eqref{eq:D_N_t}, we have for any $t\in\mathcal T$,
\[
\begin{aligned}
\|x_N(t)-x(t)\|
&\le
\int_0^t
\left\|
H_N(s,x_N(s))
-
H(s,x(s))
\right\|\,ds                                      \\
&\le
\int_0^T D_N(s)\,ds
+
\int_0^t
\bigl(2+\ell_\Phi(s)(1+\kappa_Y(s))\bigr)
\|x_N(s)-x(s)\|\,ds .
\end{aligned}
\]
By Gronwall's inequality \cite[Lemma~2.7]{teschl2012ordinary}, it follows that
$
\sup_{t\in\mathcal T}
\|x_N(t)-x(t)\|
\le
C_T
\int_0^T D_N(s)\,ds,
$
where
$
C_T
:=
\exp\left(
\int_0^T
\bigl(2+\ell_\Phi(s)(1+\kappa_Y(s))\bigr)\,ds
\right).
$
Therefore, the assertion follows from Lemma~\ref{lem:SAA_drift_as_convergence}.
\end{proof}

\begin{remark}[Time discretization]
Theorem~\ref{thm:SAA_convergence} concerns the convergence of the
continuous-time SAA trajectory. For numerical implementation, the projected ODE
\eqref{Expected_system} and its SAA counterpart
\eqref{eq:SAA_state_equation} can be further approximated by suitable
time-discretization schemes. Related convergence results for discrete-time
approximations of differential inclusions and dynamic variational inequality
systems can be found in
\cite{chen2025differential,chen2022dynamic}. Since the present formulation is
obtained by substituting the second-stage response map into the first-stage
dynamics, the same discretization arguments apply to the resulting ODE and yield convergence of the discrete trajectories as the mesh
size tends to zero, provided that the expected-value mappings are
well defined.
\end{remark}

\section{Transfer Learning in the History-Dependent DSVI Model}
\label{sec:transfer_VI}

In this section, we study transfer learning in the history-dependent DSVI model
\eqref{eq:closed_loop_model_projected_random} through a source-to-target
stability analysis. The question is how the first-stage state trajectory changes
when the initial state and the law of the exogenous process are changed from a
source environment to a related target environment.

Assume throughout this section that the exogenous processes have continuous
sample paths. Let
$
    \Lambda
    :=
    \{\gamma\in\mathcal C_\ell:\gamma(t)\in\Xi,\ \forall t\in\mathcal T\}
$
be the path space. Let
\(\bm{\xi}^\alpha=(\bm{\xi}_t^\alpha)_{t\in\mathcal T}\) and
\(\bm{\xi}^\beta=(\bm{\xi}_t^\beta)_{t\in\mathcal T}\) be two
\(\Xi\)-valued exogenous stochastic processes, and let
\(\x_0^\alpha,\x_0^\beta\in\mathcal X\) be two initial states. For
\(\rho\in\{\alpha,\beta\}\), define
$
    \mu^\rho
    :=
    \mathbb P\circ(\bm\xi^\rho)^{-1},
$
the law of the full sample path on \(\Lambda\).

Since the second-stage SVI \eqref{eq:closed_loop_model_projected_random_second} admits a unique solution, for
\(t\in\mathcal T\), \(\gamma\in\Lambda\), and \(\x\in\mathcal X\), we denote
this solution by
\(\y(t,\gamma_{\cdot\wedge t},\x)\). For each
\(\rho\in\{\alpha,\beta\}\), define
\begin{equation}\label{bar_phi_rho}
\bar\Phi^\rho(t,\x)
:=
\int_{\Lambda}
\Phi\bigl(
t,\gamma(t),\x,
\y(t,\gamma_{\cdot\wedge t},\x)
\bigr)
\,\mu^\rho(d\gamma),
\quad
(t,\x)\in\mathcal T\times\mathcal X .
\end{equation}
Although the integral is taken over the full path space, the integrand at time
\(t\) depends only on the stopped history \(\gamma_{\cdot\wedge t}\).
Define
\begin{align}
    H^\rho(t,\x)
    :=
    \Pi_{\mathcal X}\bigl(\x-\bar\Phi^\rho(t,\x)\bigr)-\x,
    \quad
    (t,\x)\in\mathcal T\times\mathcal X .\label{eq:H_rho_t_x}
\end{align}
The first-stage trajectory in environment \(\rho\) is governed by
\begin{align}
    \dot x^\rho(t)
    =
    H^\rho(t,x^\rho(t)),
    \quad
    x^\rho(0)=\x_0^\rho .\label{eq:ODE_rho}
\end{align}
By Theorem~\ref{thm:first_stage_existence}, this ODE admits a unique solution.
The transfer-learning problem is formulated as the stability of the
solution map
$
    (\x_0,\mu)\mapsto x(\cdot;\x_0,\mu)
$
with respect to changes in the initial state and the exogenous path law.

\subsection{Metric framework for transfer stability}
\label{subsec:metric_transfer_stability}

To study the stability of the solution map
$
    (\x_0,\mu)\mapsto x(\cdot;\x_0,\mu),
$
we specify the metrics used to compare state trajectories and exogenous path
laws.

For two first-stage trajectories \(x^\alpha,x^\beta\in\mathcal C_n\), we use
the uniform distance
\[
\|x^\alpha-x^\beta\|_{\infty}
:=
\sup_{t\in\mathcal T}
\|x^\alpha(t)-x^\beta(t)\|.
\]
For the two exogenous path laws \(\mu^\alpha\) and \(\mu^\beta\) on \(\Lambda\),
we use the  Wasserstein distance
\[
W_1(\mu^\alpha,\mu^\beta)
:=
\inf_{\pi\in\Pi(\mu^\alpha,\mu^\beta)}
\int_{\Lambda\times\Lambda}
\|\gamma^\alpha-\gamma^\beta\|_{\infty}\,
\pi(d\gamma^\alpha,d\gamma^\beta),
\]
where \(\Pi(\mu^\alpha,\mu^\beta)\) denotes the set of all couplings of the two
path laws; see, e.g., \cite[Chapter~6]{villani2009optimal}. This distance will be used to quantify the similarity between the source and
target stochastic environments through their exogenous path laws.

\subsection{\texorpdfstring{$1/2$-Hölder continuity of parametric VI solutions}
{One-half Holder continuity of parametric VI solutions}}
\label{subsec:parametric_VI_holder}

We first study the parameter dependence of the second-stage VI. In the
history-dependent DSVI model, the second-stage feasible set is
$
\mathcal Y(t,\xi_t),
$
which depends on both the time $t$ and the current value of the exogenous
process. Therefore, to establish the transfer stability of the first-stage
trajectory, we need a stability estimate for the solution of a parametric VI
whose feasible set also moves with the parameter.

For parametric variational inequalities (PVIs) on moving sets, a common
approach is to impose regularity conditions that guarantee Lipschitz
continuity of the solution function. For example,
\cite{ChenZhangZhang2026MovingSet} studies optimization problems with
PVI constraints on a moving set, where both the VI mapping and the feasible set
involve the parameter, and establishes Lipschitz continuity of the PVI solution
mapping under suitable assumptions. Such Lipschitz-type conditions may be
stronger than what is needed in the transfer-stability analysis of this paper.

In the present setting, a weaker and more tractable assumption is that the
feasible-set mapping
$
(t,\xi)\mapsto \mathcal Y(t,\xi)
$
is Lipschitz continuous with respect to the Hausdorff distance. This raises the
question of whether such a Hausdorff-Lipschitz condition is sufficient to
ensure Lipschitz continuity of the parametric VI solution mapping. The following
example shows that, in general, it is not.

\begin{example}\label{ex:hausdorff_not_lipschitz}
Let
$
p=(1,0)\in\mathbb R^2
$
and consider the strongly monotone mapping
$
A(y)=y-p.
$
For $\varepsilon\ge0$, define
$
K_0=[-1,0]\times[0,2]$, $
q_\varepsilon=(\varepsilon,\sqrt{\varepsilon}),
$
and, for $\varepsilon>0$,
$
K_\varepsilon
:=
\operatorname{co}\bigl(K_0\cup\{q_\varepsilon\}\bigr).
$
Each $K_\varepsilon$ is nonempty, compact, and convex. Moreover,
$
K_0\subseteq K_\varepsilon
$
and
$
d_H(K_\varepsilon,K_0)=\varepsilon=|\varepsilon-0|.
$ Thus, the feasible-set mapping
$
\varepsilon\mapsto K_\varepsilon
$
is Lipschitz continuous at $\varepsilon=0$ in the Hausdorff distance.
Consider the VI:
$
0\in y_\varepsilon-p+N_{K_\varepsilon}(y_\varepsilon),
$
which is equivalent to the projection problem
$
y_\varepsilon=\Pi_{K_\varepsilon}(p).
$
For $\varepsilon=0$, we have
$
y_0=(0,0).
$
For $\varepsilon>0$, the projection lies on the segment joining $(0,0)$ and
$q_\varepsilon$. Hence
$
y_\varepsilon
=
\frac{\langle p,q_\varepsilon\rangle}{\|q_\varepsilon\|^2}
q_\varepsilon
=
\frac{1}{1+\varepsilon}
(\varepsilon,\sqrt{\varepsilon}).
$
Therefore
$
\|y_\varepsilon-y_0\|
=
\sqrt{\frac{\varepsilon}{1+\varepsilon}}
\le
|\varepsilon-0|^{1/2}.
$
Thus the solution mapping
$
\varepsilon\mapsto y_\varepsilon
$
is $1/2$-H\"older continuous on the parameter at $\varepsilon=0$.
\end{example}

The preceding example shows that Hausdorff--Lipschitz continuity of the
feasible sets alone does not imply Lipschitz continuity of the VI solution.
For the transfer analysis of our history-dependent model, however, Lipschitz
continuity is not necessary. The following lemma shows that, under the standing
strong monotonicity and boundedness assumptions, the second-stage response is
still $1/2$-Hölder continuous with respect to the history path.

For fixed $t\in\mathcal T$, $\xi_{\cdot\wedge t}\in\Lambda$,
$\y\in \mathcal Y(t,\xi_t)$, and $\x\in {\mathcal X}$, define
$
K_t(\xi_{\cdot\wedge t})
:=
\mathcal Y(t,\xi_t),
$
and
$
\Psi(t,\xi_{\cdot\wedge t},\x,\y)
:=
\int_0^t
R(s,\xi_s,\x,\y)\,ds
+
F(t,\xi_t,\x,\y).
$
Then the second-stage problem can be written as the following
history-parametric VI:
\begin{equation}
0
\in
\Psi(t,\xi_{\cdot\wedge t},\x,\y)
+
N_{K_t(\xi_{\cdot\wedge t})}(\y).\label{eq:history_parametric_VI}
\end{equation}

\begin{lemma}\label{lem:second_stage_holder_history}
Suppose Assumption~\ref{ass:existence_first_stage} holds. Assume further that
there exists $L_{\mathcal Y}>0$ such that, for all $t\in\mathcal T$ and
$\xi_1,\xi_2\in\Xi$,
$
d_H\bigl(\mathcal Y(t,\xi_1),\mathcal Y(t,\xi_2)\bigr)
\le
L_{\mathcal Y}\|\xi_1-\xi_2\|.
$
Then, for every fixed $t\in\mathcal T$ and $\x\in {\mathcal X}$, the solution
$\y(t,\xi_{\cdot\wedge t},\x)$ of \eqref{eq:history_parametric_VI} satisfies
\[
\begin{aligned}
&
\left\|
\y(t,\xi^\alpha_{\cdot\wedge t},\x)
-
\y(t,\xi^\beta_{\cdot\wedge t},\x)
\right\|
\le
C_Y(t)
\left(
\|\xi^\alpha-\xi^\beta\|_{\infty}
+
\|\xi^\alpha-\xi^\beta\|_{\infty}^{1/2}
\right),\
\forall \xi^\alpha,\xi^\beta\in\Lambda,
\end{aligned}
\]
where
\begin{align}
C_Y(t)
:=
\max\left\{\frac{
\int_0^t\ell_R(s)\,ds+\ell_F(t)
}{m_F}
,\
\sqrt{
\frac{2M_\Psi(t)L_{\mathcal Y}}{m_F}
}\right\},\label{C_Y_t}
\end{align}
and
$
M_\Psi(t)
:=
\sup_{\xi_{\cdot\wedge t}\in\Lambda,\ \x\in {\mathcal X},\ \y\in\overline{\mathcal Y}}
\left\|
\Psi(t,\xi_{\cdot\wedge t},\x,\y)
\right\|
<\infty .
$
\end{lemma}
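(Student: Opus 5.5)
Fix $t$ and $\x$, and write $\y_\alpha:=\y(t,\xi^\alpha_{\cdot\wedge t},\x)$, $\y_\beta:=\y(t,\xi^\beta_{\cdot\wedge t},\x)$, $K_\rho:=\mathcal Y(t,\xi^\rho_t)$, $\Psi_\rho(\cdot):=\Psi(t,\xi^\rho_{\cdot\wedge t},\x,\cdot)$, and $\delta:=\|\xi^\alpha-\xi^\beta\|_\infty$. The plan is to compare the two VIs through ``cross'' test points. Since the feasible sets differ, $\y_\beta$ cannot be inserted directly into the VI for $\y_\alpha$. Instead I will use the Hausdorff–Lipschitz assumption to pick $\z_\alpha:=\Pi_{K_\alpha}(\y_\beta)\in K_\alpha$ and $\z_\beta:=\Pi_{K_\beta}(\y_\alpha)\in K_\beta$. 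Then $\|\z_\alpha-\y_\beta\|\le d_H(K_\alpha,K_\beta)\le L_{\mathcal Y}\|\xi^\alpha_t-\xi^\beta_t\|\le L_{\mathcal Y}\delta$, and likewise $\|\z_\beta-\y_\alpha\|\le L_{\mathcal Y}\delta$.

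Testing the VI for $\y_\alpha$ with $\z_\alpha$ and the VI for $\y_\beta$ with $\z_\beta$ gives
$\langle \Psi_\alpha(\y_\alpha),\z_\alpha-\y_\alpha\rangle\ge0$ and $\langle \Psi_\beta(\y_\beta),\z_\beta-\y_\beta\rangle\ge0$.
Write $\z_\alpha-\y_\alpha=(\y_\beta-\y_\alpha)+(\z_\alpha-\y_\beta)$ and similarly for $\z_\beta-\y_\beta$, then add the two inequalities. This yields
\[
\langle \Psi_\alpha(\y_\alpha)-\Psi_\beta(\y_\beta),\y_\alpha-\y_\beta\rangle
\le \|\Psi_\alpha(\y_\alpha)\|\,L_{\mathcal Y}\delta+\|\Psi_\beta(\y_\beta)\|\,L_{\mathcal Y}\delta\le 2M_\Psi(t)L_{\mathcal Y}\delta .
\]
Finiteness of $M_\Psi(t)$ follows from (A1)–(A2): $\int_0^t a_{R,\mathcal X}(s)\,ds$ is finite because $\mathcal X$ is compact, and $F$ is Lipschitz on bounded sets (if $\mathcal X$ is not compact, I will take the bound as stated in the lemma). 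On the left-hand side, split $\Psi_\alpha(\y_\alpha)-\Psi_\beta(\y_\beta)=[\Psi_\alpha(\y_\alpha)-\Psi_\alpha(\y_\beta)]+[\Psi_\alpha(\y_\beta)-\Psi_\beta(\y_\beta)]$. The first bracket gives at least $m_F\|\y_\alpha-\y_\beta\|^2$ by the strong monotonicity shown in the proof of Lemma~\ref{lem:second_stage_properties}. The second bracket is bounded via (A1)–(A2) in $\xi$ by $\bigl(\int_0^t\ell_R(s)\,ds+\ell_F(t)\bigr)\delta$, using $\|\xi^\alpha_s-\xi^\beta_s\|\le\delta$ for all $s\le t$.

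Writing $e:=\|\y_\alpha-\y_\beta\|$ and $c:=\bigl(\int_0^t\ell_R+\ell_F(t)\bigr)/m_F$, this gives the quadratic inequality
\[
e^2\le c\,\delta\,e+\frac{2M_\Psi(t)L_{\mathcal Y}}{m_F}\,\delta .
\]
Solving, $e\le \tfrac12\bigl(c\delta+\sqrt{c^2\delta^2+4b\delta}\bigr)\le c\delta+\sqrt{b\delta}$ with $b=2M_\Psi L_{\mathcal Y}/m_F$, using $\sqrt{u+v}\le\sqrt u+\sqrt v$. Hence $e\le C_Y(t)(\delta+\delta^{1/2})$ with $C_Y(t)=\max\{c,\sqrt b\}$, which is exactly the claimed constant.

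I expect the main obstacle to be handling the moving feasible set, namely the construction of admissible cross test points and the resulting error term, which is only linear in $\delta$ rather than in $\delta e$. This is precisely what forces the $1/2$ exponent, consistent with Example~\ref{ex:hausdorff_not_lipschitz}. A smaller technical point is to make sure that the projections exist (the sets are closed and convex) and that the bound $d_H$ on the distance of $\y_\beta$ to $K_\alpha$ is applied with $\y_\beta\in K_\beta$.
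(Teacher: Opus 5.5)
Your proposal is correct and follows the paper's proof essentially step for step. Your projections $\Pi_{K_\alpha}(\y_\beta)$ and $\Pi_{K_\beta}(\y_\alpha)$ are just a concrete choice of the paper's cross test points $\widetilde\y_\beta,\widetilde\y_\alpha$. The strong-monotonicity split, the resulting inequality $m_F e^2\le 2M_\Psi(t)L_{\mathcal Y}\delta+m_F c\,\delta\,e$, and the solution of the quadratic are the same as in the paper.

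One minor caveat concerns your aside that $M_\Psi(t)<\infty$ follows from (A1)--(A2). Lipschitz continuity of $F$ in $\xi$ does not bound $F$ over all paths in $\Lambda$ when $\Xi$ is unbounded, so this justification fails in general. The paper also does not derive this finiteness; it builds it into the statement, so your main argument is unaffected.
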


\begin{proof}
Fix $t\in\mathcal T$ and $\x\in {\mathcal X}$. For simplicity, write
$
\y_\alpha:=\y(t,\xi^\alpha_{\cdot\wedge t},\x),
\
\y_\beta:=\y(t,\xi^\beta_{\cdot\wedge t},\x),
$
and
$
K_\alpha:=K_t(\xi^\alpha_{\cdot\wedge t}),
\
K_\beta:=K_t(\xi^\beta_{\cdot\wedge t}).
$
By Lemma \ref{lem:second_stage_properties}, the solution of
\eqref{eq:history_parametric_VI} is unique.
We now estimate the dependence of the solution on the history path. Set
$
d:=\|\xi^\alpha-\xi^\beta\|_{\infty}$ and $
h:=d_H(K_\alpha,K_\beta).
$
By the Hausdorff continuity of $\mathcal Y$,
\[
\begin{aligned}
h
=
d_H\bigl(
\mathcal Y(t,\xi^\alpha_t),
\mathcal Y(t,\xi^\beta_t)
\bigr)                                                 \le
L_{\mathcal Y}\|\xi^\alpha_t-\xi^\beta_t\|
\le
L_{\mathcal Y}\|\xi^\alpha-\xi^\beta\|_{\infty}
=
L_{\mathcal Y}d .
\end{aligned}
\]

By the definition of the Hausdorff distance, there exist
$\widetilde \y_\beta\in K_\alpha$ and $\widetilde \y_\alpha\in K_\beta$ such that
$
\|\widetilde \y_\beta-\y_\beta\|\le h$ and $
\|\widetilde \y_\alpha-\y_\alpha\|\le h .
$
Using $\widetilde \y_\beta\in K_\alpha$ in
\eqref{eq:history_parametric_VI}, we have
$
\left\langle
\Psi(t,\xi^\alpha_{\cdot\wedge t},\x,\y_\alpha),
\widetilde \y_\beta-\y_\alpha
\right\rangle
\ge 0.
$
Since
$
\widetilde \y_\beta-\y_\alpha
=
\y_\beta-\y_\alpha
+
\widetilde \y_\beta-\y_\beta,
$
it follows that
\begin{align}
\left\langle
\Psi(t,\xi^\alpha_{\cdot\wedge t},\x,\y_\alpha),
\y_\alpha-\y_\beta
\right\rangle
&\le
\left\langle
\Psi(t,\xi^\alpha_{\cdot\wedge t},\x,\y_\alpha),
\widetilde \y_\beta-\y_\beta
\right\rangle                               \le
M_\Psi(t)h .\label{1lem1}
\end{align}
Similarly, using $\widetilde \y_\alpha\in K_\beta$ in
\eqref{eq:history_parametric_VI}, we get
$
\left\langle
\Psi(t,\xi^\beta_{\cdot\wedge t},\x,\y_\beta),
\widetilde \y_\alpha-\y_\beta
\right\rangle
\ge 0.
$
Hence,
\begin{align}
&\quad
-\left\langle
\Psi(t,\xi^\beta_{\cdot\wedge t},\x,\y_\beta),
\y_\alpha-\y_\beta
\right\rangle                                             \nonumber\\
&=
-\left\langle
\Psi(t,\xi^\beta_{\cdot\wedge t},\x,\y_\beta),
\widetilde \y_\alpha-\y_\beta
\right\rangle
+
\left\langle
\Psi(t,\xi^\beta_{\cdot\wedge t},\x,\y_\beta),
\widetilde \y_\alpha-\y_\alpha
\right\rangle         \le
M_\Psi(t)h . \label{2lem1}
\end{align}
Let
$
e:=\y_\alpha-\y_\beta .
$
By the strong monotonicity of
$\Psi(t,\xi^\alpha_{\cdot\wedge t},\x,\cdot)$,
\[{\small
\begin{aligned}
&m_F\|e\|^2
\le
\left\langle
\Psi(t,\xi^\alpha_{\cdot\wedge t},\x,\y_\alpha)
-
\Psi(t,\xi^\alpha_{\cdot\wedge t},\x,\y_\beta),
e
\right\rangle    \\
=&
\left\langle
\Psi(t,\xi^\alpha_{\cdot\wedge t},\x,\y_\alpha),
e
\right\rangle
-
\left\langle
\Psi(t,\xi^\beta_{\cdot\wedge t},\x,\y_\beta),
e
\right\rangle                            
-
\left\langle
\Psi(t,\xi^\alpha_{\cdot\wedge t},\x,\y_\beta)
-
\Psi(t,\xi^\beta_{\cdot\wedge t},\x,\y_\beta),
e
\right\rangle .
\end{aligned}}
\]
Using \eqref{1lem1} and \eqref{2lem1}, together with
Assumption~\ref{ass:existence_first_stage}, we obtain
\begin{align}
m_F\|e\|^2
&\le
2M_\Psi(t)h
+
\left\|
\Psi(t,\xi^\alpha_{\cdot\wedge t},\x,\y_\beta)
-
\Psi(t,\xi^\beta_{\cdot\wedge t},\x,\y_\beta)
\right\|
\|e\|                         \nonumber                            \\
&\le
2M_\Psi(t)L_{\mathcal Y}d
+
\left(
\int_0^t\ell_R(s)\,ds+\ell_F(t)
\right)
d\|e\|.\label{quadratic_inequality}
\end{align}
Indeed,
\[
\begin{aligned}
&
\left\|
\Psi(t,\xi^\alpha_{\cdot\wedge t},\x,\y_\beta)
-
\Psi(t,\xi^\beta_{\cdot\wedge t},\x,\y_\beta)
\right\|                                                 \\
&\le
\int_0^t
\left\|
R(s,\xi^\alpha_s,\x,\y_\beta)
-
R(s,\xi^\beta_s,\x,\y_\beta)
\right\|\,ds
+
\left\|
F(t,\xi^\alpha_t,\x,\y_\beta)
-
F(t,\xi^\beta_t,\x,\y_\beta)
\right\|                                                 \\
&\le
\left(
\int_0^t\ell_R(s)\,ds+\ell_F(t)
\right)
\|\xi^\alpha-\xi^\beta\|_{\infty}.
\end{aligned}
\]

Solving the quadratic inequality \eqref{quadratic_inequality} yields
\[
\|e\|
\le
\frac{
\left(
\int_0^t\ell_R(s)\,ds+\ell_F(t)
\right)d
+
\sqrt{
\left(
\int_0^t\ell_R(s)\,ds+\ell_F(t)
\right)^2d^2
+
8m_FM_\Psi(t)L_{\mathcal Y}d
}
}{2m_F}.
\]
Using $\sqrt{a+b}\le\sqrt a+\sqrt b$ for $a,b\ge0$, we obtain
\[
\|e\|
\le
\frac{
\int_0^t\ell_R(s)\,ds+\ell_F(t)
}{m_F}d
+
\sqrt{
\frac{2M_\Psi(t)L_{\mathcal Y}}{m_F}
}
d^{1/2}.
\]
Thus, we have
$
\left\|
\y(t,\xi^\alpha_{\cdot\wedge t},\x)
-
\y(t,\xi^\beta_{\cdot\wedge t},\x)
\right\|
\le
C_Y(t)
\left(
\|\xi^\alpha-\xi^\beta\|_{\infty}
+
\|\xi^\alpha-\xi^\beta\|_{\infty}^{1/2}
\right).
$
The proof is complete.
\end{proof}

\subsection{Transfer stability of the first-stage trajectory}
\label{subsec:transfer_stability_x}

We now return to the history-dependent DSVI model
\eqref{eq:closed_loop_model_projected_random}. As shown in the preceding
subsection, Hausdorff-Lipschitz continuity of the feasible-set mapping
generally leads to a $1/2$-Hölder dependence of the
parametric VI solution. Motivated by this observation, we formulate the
following assumption for the second-stage response and then establish the
stability of the first-stage trajectory with respect to the exogenous path
distribution and the initial state.

\begin{theorem}\label{thm:transfer_stability_x}
Suppose that the assumptions of Theorem~\ref{thm:first_stage_existence} and
Lemma~\ref{lem:second_stage_holder_history} hold. For
\(\rho\in\{\alpha,\beta\}\), let \(x^\rho\) be the unique solution of
\eqref{eq:ODE_rho}.
Then there exists a constant $C_T>0$ such that
\[
\|x^\alpha-x^\beta\|_{\infty}
\le
C_T
\left(
\|\x_0^\alpha-\x_0^\beta\|
+
W_1(\mu^\alpha,\mu^\beta)
+
W_1(\mu^\alpha,\mu^\beta)^{1/2}
\right).
\]
\end{theorem}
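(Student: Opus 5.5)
The plan is a Gronwall argument on the difference of the two projected ODEs \eqref{eq:ODE_rho}. The drift perturbation is controlled by a coupling bound for $\bar\Phi^\alpha-\bar\Phi^\beta$ built on Lemma~\ref{lem:second_stage_holder_history}.

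\emph{Step 1: drift perturbation.} Fix $t$ and $\x\in\mathcal X$, and let $\pi\in\Pi(\mu^\alpha,\mu^\beta)$ be any coupling. Since both $\bar\Phi^\alpha$ and $\bar\Phi^\beta$ are integrals of the same integrand $G(t,\x,\gamma)=\Phi(t,\gamma(t),\x,\y(t,\gamma_{\cdot\wedge t},\x))$ against the respective marginals, we can write
\[
\bar\Phi^\alpha(t,\x)-\bar\Phi^\beta(t,\x)=\int_{\Lambda\times\Lambda}\bigl(G(t,\x,\gamma^\alpha)-G(t,\x,\gamma^\beta)\bigr)\,\pi(d\gamma^\alpha,d\gamma^\beta).
\]
By (A3) and Lemma~\ref{lem:second_stage_holder_history}, the integrand is bounded by
\[
\ell_\Phi(t)\bigl(\|\gamma^\alpha-\gamma^\beta\|_\infty+C_Y(t)(\|\gamma^\alpha-\gamma^\beta\|_\infty+\|\gamma^\alpha-\gamma^\beta\|_\infty^{1/2})\bigr).
\]
We integrate this bound against $\pi$. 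The square-root term is handled with Jensen's inequality, $\int d^{1/2}\,d\pi\le(\int d\,d\pi)^{1/2}$. Taking the infimum over $\pi$ then gives
\[
\sup_{\x\in\mathcal X}\|\bar\Phi^\alpha(t,\x)-\bar\Phi^\beta(t,\x)\|\le \ell_\Phi(t)(1+C_Y(t))\bigl(W_1+W_1^{1/2}\bigr),
\]
with $W_1=W_1(\mu^\alpha,\mu^\beta)$. Strictly, we pass to the limit along near-optimal couplings, using the monotonicity of $a\mapsto a+a^{1/2}$. Integrability of the integrand is guaranteed by the bounds from Theorem~\ref{thm:first_stage_existence}.

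\emph{Step 2: ODE comparison.} Nonexpansiveness of $\Pi_{\mathcal X}$ gives
\[
\|H^\alpha(t,\x)-H^\beta(t,\x)\|\le\|\bar\Phi^\alpha(t,\x)-\bar\Phi^\beta(t,\x)\|.
\]
The Lipschitz estimate \eqref{eq:linear_growth_H_t_x} applies to $H^\beta$ with modulus $L_H(t)=2+\ell_\Phi(t)(1+\kappa_Y(t))$. Writing $x^\alpha(t)-x^\beta(t)=\x_0^\alpha-\x_0^\beta+\int_0^t\bigl(H^\alpha(s,x^\alpha)-H^\beta(s,x^\beta)\bigr)ds$ and splitting the integrand as $[H^\alpha-H^\beta](s,x^\alpha)+[H^\beta(s,x^\alpha)-H^\beta(s,x^\beta)]$, we obtain
\[
\|x^\alpha(t)-x^\beta(t)\|\le\|\x_0^\alpha-\x_0^\beta\|+\Bigl(\int_0^T\ell_\Phi(1+C_Y)\,ds\Bigr)(W_1+W_1^{1/2})+\int_0^tL_H(s)\|x^\alpha(s)-x^\beta(s)\|\,ds.
\]
Gronwall's inequality then yields the claim with
\[
C_T=\exp\Bigl(\int_0^TL_H\Bigr)\max\Bigl\{1,\int_0^T\ell_\Phi(1+C_Y)\Bigr\}.
\]

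\emph{Main obstacle.} The delicate point is the finiteness of $\int_0^T\ell_\Phi(s)C_Y(s)\,ds$. The part of $C_Y$ coming from $\kappa_Y$ is fine, because $\beta=\ell_\Phi\kappa_Y\in L^1$ by assumption. The other part involves $\ell_\Phi\sqrt{M_\Psi L_{\mathcal Y}/m_F}$. Here $M_\Psi(t)\le\int_0^Ta_{R,\mathcal X}+\sup\|F(t,\cdot)\|$ is finite for each $t$ because $\mathcal X$, $\overline{\mathcal Y}$ are bounded and $F$ is Lipschitz, but its integrability against $\ell_\Phi$ is not automatic. I would first try to bound $\sqrt{M_\Psi}\le 1+M_\Psi$ and show $\ell_\Phi M_\Psi$ is integrable. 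If that fails without extra hypotheses, I would read the hypotheses of Lemma~\ref{lem:second_stage_holder_history} as implicitly requiring $\sup_t M_\Psi(t)<\infty$, so that $\ell_\Phi C_Y\le c(\ell_\Phi+\beta)\in L^1$. I would state this requirement explicitly in the constant. A secondary technical check is measurability in $t$ of the bound, which holds since the bound is explicit.
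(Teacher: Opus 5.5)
Your proposal is correct and follows the paper's proof essentially line by line: the coupling--Jensen--infimum bound $\sup_{\x}\|\bar\Phi^\alpha(t,\x)-\bar\Phi^\beta(t,\x)\|\le D(t)\bigl(W_1+W_1^{1/2}\bigr)$ with $D=\ell_\Phi(1+C_Y)$, the Lipschitz modulus $B=2+\ell_\Phi(1+\kappa_Y)$ for $H^\rho$, Gronwall, and the constant $C_T=\exp\bigl(\int_0^T B\bigr)\max\bigl\{1,\int_0^T D\bigr\}$ all match the paper (adding and subtracting $H^\beta(s,x^\alpha(s))$ rather than the paper's $H^\alpha(s,x^\beta(s))$ is immaterial). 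The integrability issue you flag is genuine, and the paper's proof leaves it implicit because it never checks $\int_0^T D<\infty$: this holds automatically when $\Xi$ and $\mathcal X$ are bounded, since then $M_\Psi(t)\le c_1+c_2\ell_F(t)$ and $\ell_\Phi\ell_F\le m_F\beta\in L^1$, so your first route works; it fails in general (and your justification of pointwise finiteness of $M_\Psi$ also tacitly needs $\Xi$ bounded), so recording a hypothesis such as $\sup_t M_\Psi(t)<\infty$ explicitly is the right way to close the argument.
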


\begin{proof}
Fix $t\in\mathcal T$ and $\x\in {\mathcal X}$. Let
$\xi^\alpha,\xi^\beta\in\Lambda$ be two admissible paths, and denote by
$
\y(t,\xi^\alpha_{\cdot\wedge t},\x)
$
and
$
\y(t,\xi^\beta_{\cdot\wedge t},\x)
$
the unique second-stage VI solutions of \eqref{eq:history_parametric_VI} corresponding to the paths
$\xi^\alpha$ and $\xi^\beta$, respectively.
By Lemma~\ref{lem:second_stage_holder_history}, we have
\[
\left\|
\y(t,\xi^\alpha_{\cdot\wedge t},\x)
-
\y(t,\xi^\beta_{\cdot\wedge t},\x)
\right\| \le
C_Y(t)
\left(
\|\xi^\alpha-\xi^\beta\|_{\infty}
+
\|\xi^\alpha-\xi^\beta\|_{\infty}^{1/2}
\right),
\]
where $C_Y(t)$ is the nonnegative function defined in \eqref{C_Y_t}.
Meanwhile,  Lemma~\ref{lem:second_stage_properties} gives the
Lipschitz dependence of the second-stage response on \(\x\) with modulus
\(\kappa_Y(t):=m_F^{-1}\bigl(\ell_F(t)+\int_0^t\ell_R(s)\,ds\bigr)\).

We next estimate the expected-value mapping in \eqref{bar_phi_rho}. For each
$\rho\in\{\alpha,\beta\}$, Assumption~\ref{ass:existence_first_stage}
\textup{(A3)} and the estimate \eqref{bar_phi_estimate} give
\[
\|\bar\Phi^\rho(t,\x_1)-\bar\Phi^\rho(t,\x_2)\|
\le
\ell_\Phi(t)(1+\kappa_Y(t))
\|\x_1-\x_2\|.
\]

Let $\pi\in\Pi(\mu^\alpha,\mu^\beta)$. For a fixed $\x\in {\mathcal X}$, by
Assumption~\ref{ass:existence_first_stage}\textup{(A3)},
\[
\begin{aligned}
&\quad
\|\bar\Phi^\alpha(t,\x)-\bar\Phi^\beta(t,\x)\|                 \\
&\le
\int_{\Lambda\times\Lambda}
\Bigl\|
\Phi\bigl(t,\xi^\alpha_t,\x,
\y(t,\xi^\alpha_{\cdot\wedge t},\x)\bigr)
-
\Phi\bigl(t,\xi^\beta_t,\x,
\y(t,\xi^\beta_{\cdot\wedge t},\x)\bigr)
\Bigr\|
\,\pi(d\xi^\alpha,d\xi^\beta)                           \\
&\le
\ell_\Phi(t)
\int_{\Lambda\times\Lambda}
\left[
\|\xi^\alpha-\xi^\beta\|_{\infty}
+
C_Y(t)
\left(
\|\xi^\alpha-\xi^\beta\|_{\infty}
+
\|\xi^\alpha-\xi^\beta\|_{\infty}^{1/2}
\right)
\right]
\,\pi(d\xi^\alpha,d\xi^\beta).
\end{aligned}
\]
Applying Jensen's inequality 
to the
concave function $r\mapsto r^{1/2}$ on $\mathbb R_+$, we have
\[
\int_{\Lambda\times\Lambda}
\|\xi^\alpha-\xi^\beta\|_{\infty}^{1/2}
\,d\pi
\le
\left(
\int_{\Lambda\times\Lambda}
\|\xi^\alpha-\xi^\beta\|_{\infty}
\,d\pi
\right)^{1/2}.
\]
Taking the infimum over all couplings $\pi\in\Pi(\mu^\alpha,\mu^\beta)$ gives
\[
\sup_{\x\in {\mathcal X}}
\|\bar\Phi^\alpha(t,\x)-\bar\Phi^\beta(t,\x)\|
\le
\ell_\Phi(t)
\left[
(1+C_Y(t))W_1(\mu^\alpha,\mu^\beta)
+
C_Y(t)W_1(\mu^\alpha,\mu^\beta)^{1/2}
\right].
\]
By \eqref{eq:H_rho_t_x} and the nonexpansiveness of
\(\Pi_{\mathcal X}\), we have
\(\|H^\rho(t,\x_1)-H^\rho(t,\x_2)\|
\le B(t)\|\x_1-\x_2\|\), where
\(B(t):=2+\ell_\Phi(t)\bigl(1+\kappa_Y(t)\bigr)\).
Moreover,
\[
\sup_{\x\in {\mathcal X}}
\|H^\alpha(t,\x)-H^\beta(t,\x)\|
\le
D(t)
\left(
W_1(\mu^\alpha,\mu^\beta)
+
W_1(\mu^\alpha,\mu^\beta)^{1/2}
\right),
\]
where we use $C_Y(t)\le 1+C_Y(t)$, and
\begin{align}
D(t)
:=
\ell_\Phi(t)(1+C_Y(t)).\label{eq:D_t}
\end{align}

Since
$
x^\rho(t)
=
\x_0^\rho
+
\int_0^t H^\rho(s,x^\rho(s))\,ds,
\
\rho\in\{\alpha,\beta\},
$
we have
\[
\begin{aligned}
\|x^\alpha(t)-x^\beta(t)\|
&\le
\|\x_0^\alpha-\x_0^\beta\|+
\int_0^t
\|H^\alpha(s,x^\alpha(s))-H^\beta(s,x^\beta(s))\|\,ds .
\end{aligned}
\]
For the integrand, adding and subtracting
$H^\alpha(s,x^\beta(s))$ gives
\[
\begin{aligned}
&\quad
\|H^\alpha(s,x^\alpha(s))-H^\beta(s,x^\beta(s))\|        \\
&\le
B(s)\|x^\alpha(s)-x^\beta(s)\|+
D(s)
\left(
W_1(\mu^\alpha,\mu^\beta)
+
W_1(\mu^\alpha,\mu^\beta)^{1/2}
\right).
\end{aligned}
\]
Therefore, for every $t\in\mathcal T$,
\[
\begin{aligned}
\|x^\alpha(t)-x^\beta(t)\|
&\le
\|\x_0^\alpha-\x_0^\beta\|+
\int_0^t
B(s)\|x^\alpha(s)-x^\beta(s)\|\,ds                 \\
&\quad+
\left(
W_1(\mu^\alpha,\mu^\beta)
+
W_1(\mu^\alpha,\mu^\beta)^{1/2}
\right)
\int_0^tD(s)\,ds .
\end{aligned}
\]
By Gronwall's inequality \cite[Lemma~2.7]{teschl2012ordinary}, we obtain
\[
\begin{aligned}
&\quad\|x^\alpha(t)-x^\beta(t)\|\\
&\le
\left[
\|\x_0^\alpha-\x_0^\beta\|
+
\left(
W_1(\mu^\alpha,\mu^\beta)
+
W_1(\mu^\alpha,\mu^\beta)^{1/2}
\right)
\int_0^T D(s)\,ds
\right]
\exp\left(\int_0^T B(s)\,ds\right).
\end{aligned}
\]
Taking the supremum over $t\in\mathcal T$ yields
\[
\|x^\alpha-x^\beta\|_{\infty}
\le
C_T
\left(
\|\x_0^\alpha-\x_0^\beta\|
+
W_1(\mu^\alpha,\mu^\beta)
+
W_1(\mu^\alpha,\mu^\beta)^{1/2}
\right),
\]
where one can take
$
C_T
:=
\exp\left(\int_0^T B(s)\,ds\right)
\max\left\{
1,\int_0^T D(s)\,ds
\right\}.
$
\end{proof}

\begin{remark}\label{rem:transfer_interpretation}
Theorem~\ref{thm:transfer_stability_x} gives Lipschitz stability with respect
to the initial state and local \(1/2\)-Hölder stability with respect to the law
of the exogenous process.
Thus, similar source and target environments and initial states yield close
first-stage trajectories, providing a quantitative justification for transfer
learning in the history-dependent DSVI model.
\end{remark}

\section{Numerical Analysis}
\label{sec:numerical_analysis}

This section presents a numerical example to illustrate the convergence of the
SAA approximation and the stability of transfer learning. The numerical test is
conducted in MATLAB 2025a on a Lenovo desktop with a 2.60GHz CPU and 32.0GB RAM.
We set
$
\mathcal T=[0,12]
$,
$
n=m=3
$,
and
$
{\mathcal X}=[-2.4,2.4]^3.
$
The time step is
$
\Delta t=0.05.
$
The exogenous process is
$
\bm\xi_t=(\bm\xi_1(t),\bm\xi_2(t))\in\mathbb R^2.
$
For each sample path $j$, the two coordinates are generated by
$
\bm\xi_1^j(t)
=
2.45a_1^j\sin(0.70t+p_1^j),
\ 
\bm\xi_2^j(t)
=
2.35a_2^j\cos(0.65t+p_2^j),
$
where
$
a_i^j\sim N(0,1)
$
and
$
p_i^j\sim U(0,2\pi)
$
are independent.
For the second-stage VI \eqref{eq:closed_loop_model_projected_random_second}, we choose
\[
F(t,\xi,\x,\y)
=
A_F(t,\xi,\x)\y
+
\beta_F d_\y^\top\tanh(d_\y \y)
+
b_F(t,\xi,\x),
\]
\[
A_F(t,\xi,\x)
=
\begin{pmatrix}
2.70 & 0.10\cos(0.30t) & -0.08\tanh(\xi_1)\\
0.10\cos(0.30t) & 2.68 & 0.08\sin(\xi_2)\\
-0.08\tanh(\xi_1) & 0.08\sin(\xi_2) & 2.72
\end{pmatrix},
\]
\[
b_F(t,\xi,\x)
=
\begin{pmatrix}
0.25\tanh(x_1+\xi_1)\\
0.25\tanh(x_2+\xi_2)\\
0.22\tanh(x_3+\xi_1-\xi_2)
\end{pmatrix},
\
\beta_F=0.30,
\
d_\y=(0.80,-0.45,0.35).
\]
The history-dependent mapping is
\[
R(s,\xi,\x,\y)
=
A_R(s,\xi,\x)\y
+
\beta_R d_\y^\top\tanh(d_\y \y)
+
b_R(s,\xi,\x),
\]
\[
A_R(s,\xi,\x)
=
\begin{pmatrix}
0.60 & 0.03\sin(0.25s) & -0.03\tanh(\xi_1)\\
0.03\sin(0.25s) & 0.62 & 0.03\cos(\xi_2)\\
-0.03\tanh(\xi_1) & 0.03\cos(\xi_2) & 0.61
\end{pmatrix},
\]
\[
b_R(s,\xi,\x)
=
\begin{pmatrix}
0.14\tanh(x_1)+0.14\tanh(\xi_1)\\
0.14\tanh(x_2)+0.14\tanh(\xi_2)\\
0.12\tanh(x_3)+0.12\tanh(\xi_1-\xi_2)
\end{pmatrix},
\
\beta_R=0.12.
\]
The feasible set is the moving box
$
\mathcal Y(t,\xi)
=
[c(t,\xi)-r,\ c(t,\xi)+r],
$
where
\[
c(t,\xi)
=
\begin{pmatrix}
0.65\tanh(\xi_1)\\
0.65\tanh(\xi_2)\\
0.60\tanh(\xi_1-\xi_2)
\end{pmatrix},
\quad
r=(1.30,1.25,1.30)^\top .
\]
Hence
$
\mathcal Y(t,\xi)\subset \overline{\mathcal Y}
$
with
$
\overline{\mathcal Y}=[-2,2]^3.
$
The first-stage mapping in \eqref{eq:closed_loop_model_projected_random_first} is chosen as
\[
\Phi(t,\xi,\x,\y)
=
\begin{pmatrix}
0.45\sin(2\pi t/12)
+
0.90\sin(0.45x_1+0.75y_1+0.85\xi_1)
\\
0.42\cos(2\pi t/12)
+
0.88\sin(0.45x_2+0.75y_2+0.85\xi_2)
\\
0.44\sin(2\pi t/12+\pi/3)
+
0.86\sin(0.45x_3+0.75y_3+0.70(\xi_1-\xi_2))
\end{pmatrix}.
\]

It is clear that   Assumption~\ref{ass:existence_first_stage} holds. 
Since $c(t,\xi)$ is continuous and the sample paths of $\bm\xi$ are continuous, the graph of $(t,\omega)\mapsto\mathcal Y(t,\bm\xi_t(\omega))$ is measurable and the moving box $\mathcal Y(t,\xi)$ is Hausdorff-Lipschitz continuous with
respect to $\xi$; hence the condition in Lemma~\ref{lem:second_stage_holder_history} is satisfied. Consequently, by Theorem~\ref{thm:first_stage_existence}, the closed-loop
system admits a unique solution
$
x\in AC(\mathcal T;{\mathcal X})
$
for every
$
\x_0\in {\mathcal X}
$.

In the numerical experiments, the true solution of \eqref{Expected_system} is represented by a
large-sample reference trajectory $x_{\rm ref}(t)$, computed with
$N_{\rm ref}=3500$ sampled realizations
$\xi^1,\dots,\xi^{N_{\rm ref}}$ of the exogenous process $\bm\xi$. For each sampled
history $\xi^j_{\cdot\wedge t}$ and state $\x$, the response
$\y(t,\xi^j_{\cdot\wedge t},\x)$ is obtained by solving the second-stage VI
\eqref{eq:closed_loop_model_projected_random_second}. Specifically,
\[
\bar\Phi_{\rm ref}(t,\x):=N_{\rm ref}^{-1}
\sum_{j=1}^{N_{\rm ref}}
\Phi(t,\xi_t^j,\x,\y(t,\xi^j_{\cdot\wedge t},\x)),
\]
and, for each sample size
$N$, $\bar\Phi_N(t,\x):=N^{-1}
\sum_{j=1}^{N}
\Phi(t,\xi_t^j,\x,\y(t,\xi^j_{\cdot\wedge t},\x))$. The SAA trajectory
$x_N(t)$ is computed from $\bar\Phi_N$. All Monte Carlo experiments use $50$ repetitions.

Figure~\ref{fig:SAA_drift} reports the SAA convergence of the expected-value
mapping. For computational tractability, the uniform error over ${\mathcal X}$ of the SAA error is approximated on
the finite grid
$
{\mathcal X}_{\rm grid}
=
\{-2.4,-1.2,0,1.2,2.4\}^3,
$
and define
\[
E_N:
=
\sup_{t_k,\x\in {\mathcal X}_{\rm grid}}
\left\|
\bar\Phi_N(t_k,\x)-\bar\Phi_{\rm ref}(t_k,\x)
\right\|.
\]
The left panels compare the reference mapping and its SAA
approximations at
$
\x=(0,0,0)^\top
$.
The upper-right panel gives the empirical tail probability
$
\mathrm P\{E_N>\varepsilon\}
$,
and the lower-right panel gives the mean, median, and interquartile envelope of
$
E_N
$.
The decay of these curves illustrates the uniform convergence of the SAA.

\begin{figure}[htbp]
\centering
\includegraphics[width=0.85\textwidth]{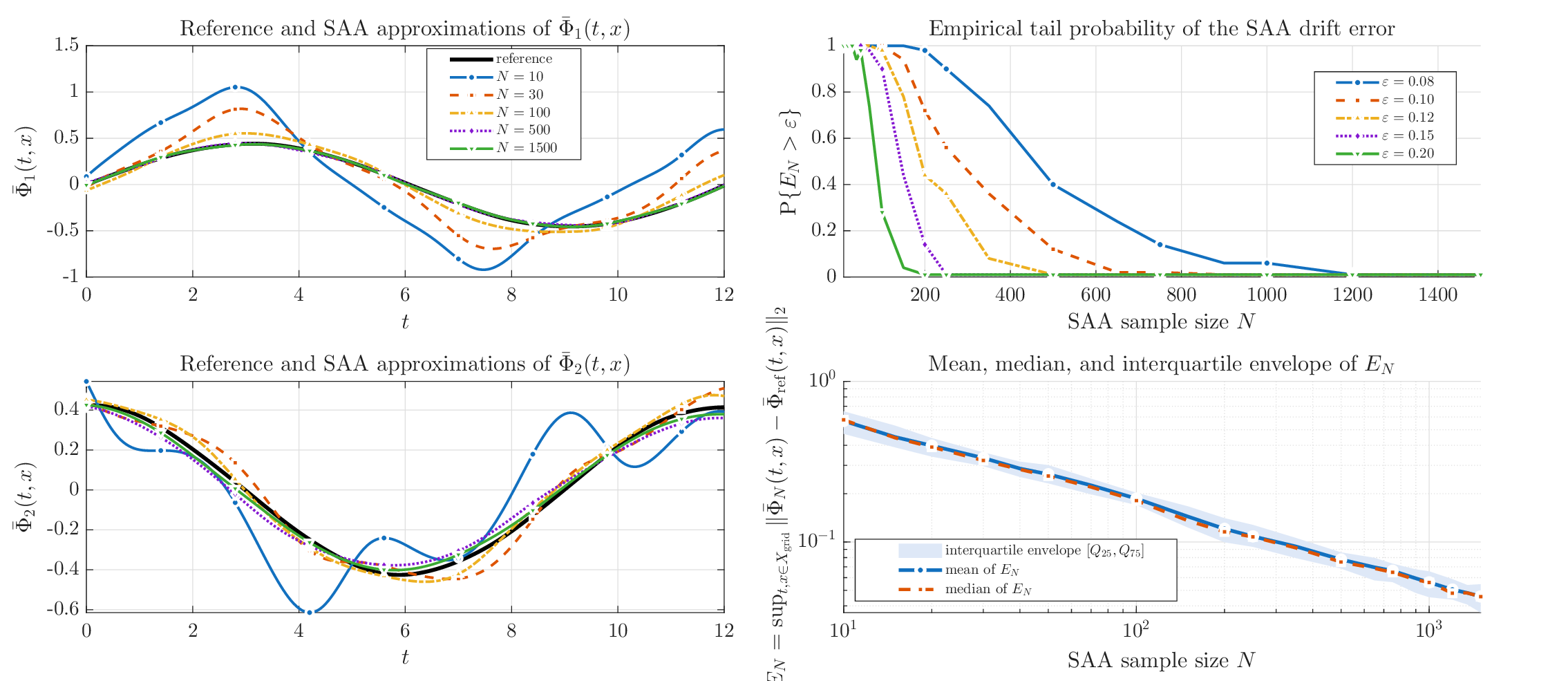}
\caption{SAA uniform convergence of the expected-value mapping.}
\label{fig:SAA_drift}
\end{figure}

Figure~\ref{fig:SAA_state} shows the convergence of the closed-loop state
trajectory. The first three panels display the three components of
$
x_N(t)
$
for different sample sizes, together with the reference trajectory. The last
panel plots
$
\sup_{t_k>0}
\|x_N(t_k)-x_{\rm ref}(t_k)\|.
$
As $N$ increases, the SAA trajectory approaches the reference solution, 
consistent with Theorem~\ref{thm:SAA_convergence}.

\begin{figure}[htbp]
\centering
\includegraphics[width=0.85\textwidth]{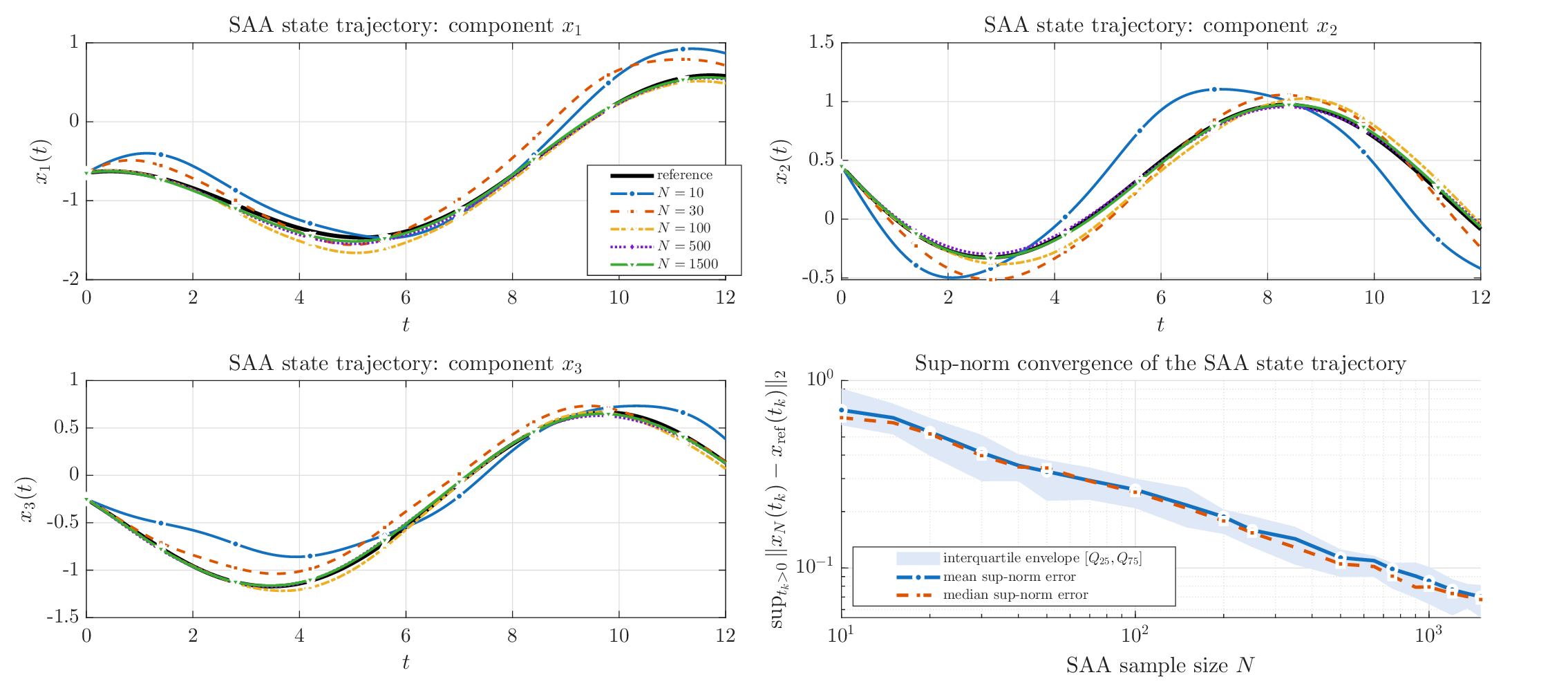}
\caption{SAA state trajectories and uniform convergence of the three-dimensional state.}
\label{fig:SAA_state}
\end{figure}

For transfer stability, we use
$
N=2000
$
sample paths. The initial state is perturbed by
$
\x_0^\beta
=
\x_0^\alpha+\delta v,
$
where $v$ is a random unit vector. The exogenous process is perturbed by a
single deterministic path direction:
$
\bm\xi^{\beta,j}(t)
=
\bm\xi^{\alpha,j}(t)
+
\Delta
\begin{pmatrix}
\sin(0.40t)\\
\cos(0.40t)
\end{pmatrix}.
$
The empirical paired coupling cost
$
\widehat W_{1,N}
=
\frac1N
\sum_{j=1}^N
\sup_{t\in[0,T]}
\left\|
\bm\xi^{\beta,j}(t)-\bm\xi^{\alpha,j}(t)
\right\|
$
is approximately equal to the perturbation level
$
\Delta.
$
In the experiment, we choose
$\Delta\in
\texttt{logspace}(-4,\log_{10}(0.20),18).
$

Figure~\ref{fig:transfer_stability} illustrates the transfer stability. The left
panel shows the dependence of
\[
\sup_{t_k>1}
\|x^\alpha(t_k)-x^\beta(t_k)\|
\]
on the initial-state perturbation
$
\|\x_0^\alpha-\x_0^\beta\|.
$
The right panel uses
$
\widehat W_{1,N}^{1/2}
$
as the horizontal axis and shows the corresponding stability with respect to
the exogenous distribution. Both panels show decreasing errors as the
perturbation level decreases, in agreement with the transfer stability
estimate.

\begin{figure}[htbp]
\centering
\includegraphics[width=0.9\textwidth]{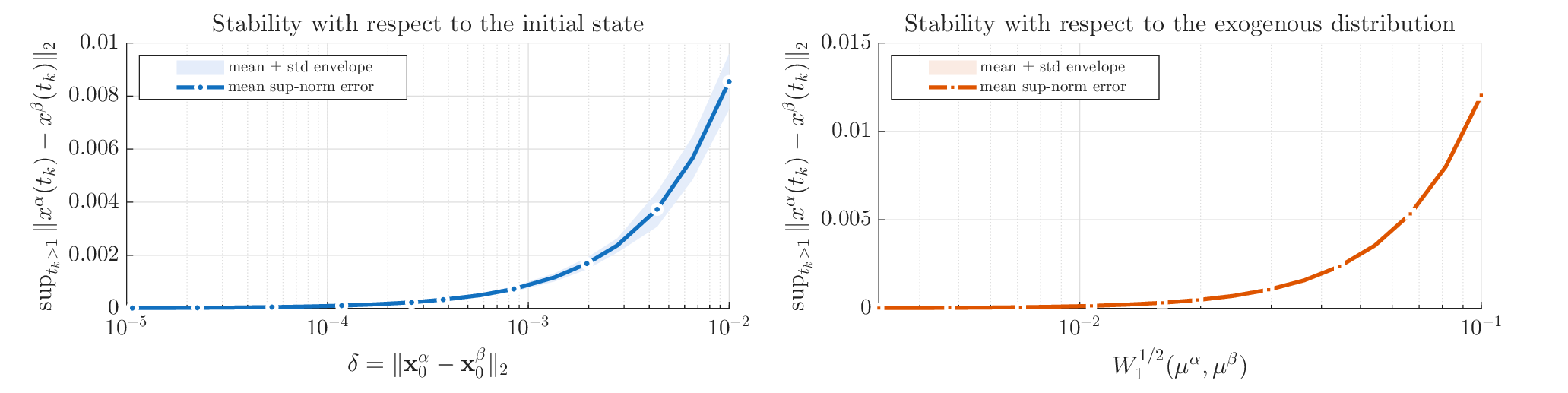}
\caption{Transfer stability under initial-state and exogenous-distribution perturbations.}
\label{fig:transfer_stability}
\end{figure}

\section{Computational Study on History-Dependent Transfer Learning}
\label{sec:health_monitoring_application}

In this section, we apply the history-dependent DSVI system
\eqref{eq:closed_loop_model_projected_random}--%
\eqref{eq:closed_loop_model_projected_random_opt}
to a synthetic source--target benchmark for elderly-health monitoring adapted
from \cite{chen2025differential}.  In this context, 
the first-stage equation
\eqref{eq:closed_loop_model_projected_random_first} describes the evolution of each individual's continuous health state $(x(t))_{t\in\mathcal T}$.
The process \(\bm{\xi}\) represents external
factors affecting the individual's health evolution.
For each history
\(\bm{\xi}_{\cdot\wedge t}\), the second-stage problem estimates the response
variable \(y(t,\bm{\xi}_{\cdot\wedge t})\), based on  the accumulated fitting error  \(R\) and the terminal fitting error \(F\). 
The two-stage structure allows
learning the history-dependent feedback component of \(\Phi\), which drives the
projected dynamics of the continuous health state. 

The goal of the monitoring system is to provide instantaneous medical response to new elderly users’ health states
based on the existing system which has been well trained with a larger and richer historical dataset from elderly users
that share similar features with the new ones.
 The analysis in the previous sections regarding the uniqueness of the solution to the second-stage SVI and the stability of the induced first-stage trajectory 
  allows us to  quickly identify  the health state of an individual based on  health state of ``similar'' elderly users in the existing system.
 In our experiments, we focus on examining the stability of the
health-state trajectory under sensor perturbations and the transferability of
the response trajectories to new users.

Let $\mathcal U_S=\{u_1^S,\ldots,u_{10}^S\}$ and
$\mathcal U_T=\{u_1^T,\ldots,u_{10}^T\}$ denote the source and target cohorts, each consisting of ten patients.
Each patient has $100$ days of smartwatch, intelligent-insole, and EMR
observations, together with health-state labels sampled every five seconds over 24 hours.
The target users are generated independently, representing a different retirement community with demographic, health, and living conditions similar to those of the source users.
 The data description
and computational materials are available at
\url{https://github.com/DSVI2025/DSVI-HD}.

For $D\in\{S,T\}$, $u\in\mathcal U_D$, modality $i\in\{1,2,3\}$, and
$\nu=1,\ldots,17280$, let
$Z_{i,D}^{u}(t_\nu)\in\mathbb R^{100\times m_i}$ and
$\eta_D^{u}(t_\nu)\in\mathbb R^{100}$ denote the feature matrix and label
vector, where $(m_1,m_2,m_3)=(14,17,30)$. The rows correspond to observation
days. For each source patient, days $1$ to $90$ are used as reference data and days
$91$ to $100$ for testing, so that $d_j^S=90+j$, $j=1,\ldots,10$. For each
target patient, ten days $d_1^T,\ldots,d_{10}^T$ are randomly selected for
evaluation and the remaining days are used as reference data. On evaluation
day $d_j^D$, write
$B_{i,D}^{u,j}(t_\nu)=\bigl(Z_{i,D}^{u}(t_\nu)\bigr)_{[d_j^D,:]}$ and
$L_D^{u,j}(t_\nu)=\bigl(\eta_D^{u}(t_\nu)\bigr)_{[d_j^D]}$.

At each time $t_\nu$, we independently sample $50$ reference days $20$ times,
using the same indices for the features and labels. For sample
$\iota=1,\ldots,20$, let
$H_{i,D}^{u,\iota}(t_\nu)\in\mathbb R^{50\times m_i}$ and
$c_D^{u,\iota}(t_\nu)\in\mathbb R^{50}$ denote the sampled features and labels,
and $\bar{\xi}_{D,u}^{\iota,t_\nu}$ denote the corresponding observation
history. To adjust the historical labels according to their similarity to the
current observation, define

\[
\resizebox{0.8\linewidth}{!}{$\displaystyle
N_{i,D}^{u,j,\iota}(t_r,t_\nu)
=
\lambda_i
\left[
\mathbf 1_{50}
-
\operatorname{softmax}
\left(
-\frac{
\operatorname{pdist2}
\bigl(
B_{i,D}^{u,j}(t_\nu),
H_{i,D}^{u,\iota}(t_r)
\bigr)
}{
\bigl(h_{i,D}^{u,\iota}(t_r)\bigr)^2
}
\right)
\right],
$}
\]

\noindent where $h_{i,D}^{u,\iota}(t_r)$ is the median pairwise distance between the rows
of $H_{i,D}^{u,\iota}(t_r)$, and $\lambda_i>0$ controls the correction size.
Thus, historical observations closer to the current observation receive
smaller corrections.

We set $\mathcal X=[0,2]$ and
$\mathcal Y=\mathcal Y_1\times\mathcal Y_2\times\mathcal Y_3$, with
$\mathcal Y_i=[-10,10]^{m_i}$. For an evaluation day $d_j^D$, let
$x_{\nu,D}^{u,j}\in\mathcal X$ denote the health-state score. The response for
modality $i$ in \eqref{eq:closed_loop_model_projected_random_opt} is obtained from
\[
\resizebox{0.8\linewidth}{!}{$\displaystyle
\begin{aligned}
\mathbf y_{i,D}^{u,j}
\bigl(t_\nu,\bar{\xi}_{D,u}^{\iota,t_\nu}\bigr)
\in
\argmin_{\mathbf y_i\in\mathcal Y_i}
\Bigg\{
&\frac{1}{2}
\sum_{r=0}^{\nu}
\left\|
H_{i,D}^{u,\iota}(t_r)\mathbf y_i
+
N_{i,D}^{u,j,\iota}(t_r,t_\nu)x_{\nu,D}^{u,j}
-
c_D^{u,\iota}(t_r)
\right\|^2
\\
&+
\frac{\rho_i}{2}
\left\|
B_{i,D}^{u,j}(t_\nu)\mathbf y_i
-
x_{\nu,D}^{u,j}
\right\|^2
\Bigg\}.
\end{aligned}
$}
\]

\noindent The first term is a cumulative linear-regression loss based on the corrected historical labels and incorporates the observation history up to $t_\nu$. The second term is a current-time fitting loss that aligns the response estimated from the current feature vector with the health-state score. The parameter $\rho_i>0$  balances the historical fitting and current-state consistency. The source-domain response trajectories are computed once for the ten source test days and retained for the transfer experiments.

The first-stage mapping in~\eqref{eq:closed_loop_model_projected_random_first}
takes the form

\[
\resizebox{0.8\linewidth}{!}{$\displaystyle
\begin{aligned}
\Phi_D^{u,j}
\bigl(
t_\nu,\bar{\xi}_{D,u}^{\iota,t_\nu},x,\mathbf y
\bigr)
={}&
A(t_\nu)x+p(t_\nu)-q(t_\nu)
\\
&-
\left[
\ell_0
+
\Delta\ell_1(t_\nu)
\left(
\sum_{i=1}^{3}
\alpha_i B_{i,D}^{u,j}(t_\nu)\mathbf y_i
+
\varepsilon_2
\bigl(t_\nu,\bar{\xi}_{D,u}^{\iota,t_\nu}\bigr)
\right)
\right],
\end{aligned}
$}
\]

\noindent where $\mathbf y=(\mathbf y_1,\mathbf y_2,\mathbf y_3)$,  $A(t_\nu)x$ and $p(t_\nu)$ describe the intrinsic damage--repair dynamics of the health state \cite{rockwood2007frailty},  $q(t_\nu)$ represents the circadian component and captures periodic fluctuations associated with the daily activity cycle~\cite{kim2023efficient}, and the last term represents the latent exogenous drive inferred from physical activity, gait stability, clinical information, and other external factors~\cite{nazaretEtAl23}.

Collecting the modality-specific responses in
$\mathbf y_D^{u,j}(t_\nu,\bar{\xi}_{D,u}^{\iota,t_\nu})$, the health-state
score is updated by
\[
\begin{aligned}
x_{\nu+1,D}^{u,j}
=
\Pi_{[0,2]}
\Bigg(
x_{\nu,D}^{u,j}
-
\frac{1}{20}
\sum_{\iota=1}^{20}
\Phi_D^{u,j}
\Bigl(
t_\nu,
\bar{\xi}_{D,u}^{\iota,t_\nu},
x_{\nu,D}^{u,j},
\mathbf y_D^{u,j}
\bigl(t_\nu,\bar{\xi}_{D,u}^{\iota,t_\nu}\bigr)
\Bigr)
\Bigg).
\end{aligned}
\]
The average approximates the history-dependent feedback, and the projection
keeps the score in $[0,2]$.

\subsection{Model setting, experimental configuration}
\label{subsec:application_model_setting}

We now specify the numerical parameters used in the computational study. We
set $A(t)\equiv\exp(-3)$, $p(t)\equiv0.01$,
$
q(t)
=
0.15\sin (2\pi\cdot0.025t)
+
0.17\sin (2\pi\cdot0.021t),
$
$\varepsilon_2(t,\xi)\equiv0$, $\ell_0=0.02$,
$\Delta\ell_1(t)\equiv1.5$,
$(\alpha_1(t),\alpha_2(t),\alpha_3(t))\equiv(0.4,0.4,0.2)$,
$\lambda_i=0.50$, and $\rho_i=5$ for $i=1,2,3$. The initial state is set
equal to the true label at the initial time,
$x_{0,D}^{u,j}=L_D^{u,j}(t_0)$.

The predicted label is obtained by discretizing the health-state score:
\[
\widehat L_D^{u,j}(t_\nu)
=
\begin{cases}
0, & x_{\nu,D}^{u,j}\le 2/3,\\
1, & 2/3<x_{\nu,D}^{u,j}<4/3,\\
2, & x_{\nu,D}^{u,j}\ge 4/3.
\end{cases}
\]

\subsubsection{Baseline results} We first evaluate the method on the held-out source-domain data.
Table~\ref{tab:source_performance} presents the class-wise accuracy, precision,
recall, specificity, and F1-score. The classification metrics are
computed following Section~5.4.1 of \cite{chen2025differential}. The history-dependent DSVI method
performs consistently well, achieving F1-scores of 0.9848, 0.9744, and 0.9974
for the healthy, weak, and ill classes, respectively. The high recall of
0.9913 for the weak class indicates that most intermediate health states are
correctly identified, while the ill class achieves near-perfect classification
with an accuracy of 0.9993 and a recall of 0.9996. These results demonstrate
the effectiveness of the proposed DSVI model in distinguishing different health
states from the source-domain multimodal data.

\begin{table}[htbp]
\caption{Class-wise held-out performance on the source-domain data.}
\label{tab:source_performance}
\centering
\setlength{\tabcolsep}{5pt}
\begin{tabular}{lccccc}
\toprule
Class & Acc. & Prec. & Recall & Spec. & F1-score \\
\midrule
healthy (0) & 0.9835 & 0.9955 & 0.9743 & 0.9947 & 0.9848 \\
weak (1)    & 0.9833 & 0.9582 & 0.9913 & 0.9795 & 0.9744 \\
ill (2)     & 0.9993 & 0.9953 & 0.9996 & 0.9993 & 0.9974 \\
\bottomrule
\end{tabular}
\end{table}

We next evaluate the target-domain performance under different response-update
frequencies. In the full-update setting, the second-stage response is
recomputed from the target reference data at every five-second time point,
providing a target-domain benchmark with dense reference information. To examine whether the computed responses can be reused over time, we also
update them every $10$ minutes, $20$ minutes, and $1$ hour. Between consecutive
updates, the most recently computed response is retained, while the state
trajectory and the evaluation metrics remain on the original five-second grid.

Table~\ref{tab:target_recomputation_update_baselines} shows that performance is consistently high across all three classes.  Furthermore, it indicates that response trajectories can be reused over appropriate time
intervals with limited loss of accuracy.  This temporal reuse provides the
basis for the transfer learning in which precomputed
source-domain responses can be reused when target-domain data are
limited. Such an effect of the response-refresh frequency will be further evaluated in
Subsection~\ref{subsec:application_temporal_transfer}.

\begin{table}[htbp]
\caption{Target-domain performance under different response-update
frequencies, with the full-update setting achieving an overall accuracy of
$97.928\%$.}
\label{tab:target_recomputation_update_baselines}
\centering
\setlength{\tabcolsep}{3pt}
\begin{tabular}{llccccc}
\toprule
Response update & Class & Acc. & Prec. & Recall & Spec. & F1-score \\
\midrule
Every 5 sec
& healthy (0) & 0.9800 & 0.9954 & 0.9674 & 0.9947 & 0.9812 \\
& weak (1)   & 0.9796 & 0.9484 & 0.9902 & 0.9745 & 0.9689 \\
& ill (2)    & 0.9990 & 0.9931 & 0.9996 & 0.9989 & 0.9964 \\
\midrule
Every 10 min
& healthy (0) & 0.9632 & 0.9696 & 0.9618 & 0.9649 & 0.9657 \\
& weak (1)   & 0.9645 & 0.9474 & 0.9420 & 0.9752 & 0.9447 \\
& ill (2)    & 0.9928 & 0.9545 & 0.9965 & 0.9922 & 0.9750 \\
\midrule
Every 20 min
& healthy (0) & 0.9529 & 0.9608 & 0.9513 & 0.9548 & 0.9560 \\
& weak (1)   & 0.9553 & 0.9423 & 0.9173 & 0.9733 & 0.9296 \\
& ill (2)    & 0.9842 & 0.9039 & 0.9928 & 0.9827 & 0.9463 \\
\midrule
Every 1 h
& healthy (0) & 0.9215 & 0.9309 & 0.9225 & 0.9203 & 0.9267 \\
& weak (1)   & 0.9302 & 0.9377 & 0.8386 & 0.9736 & 0.8854 \\
& ill (2)    & 0.9546 & 0.7652 & 0.9766 & 0.9510 & 0.8581 \\
\bottomrule
\end{tabular}
\end{table}

\subsubsection{Robustness with respect to noise}
\label{subsec:application_noise_stability}

We then examine the robustness of the source-domain health state predictions to
measurement perturbations. For a representative source user, noise is applied
entrywise to the smartwatch and intelligent-insole features
$Z_{1,S}^{u}(t_\nu)$ and $Z_{2,S}^{u}(t_\nu)$ in both the reference and
held-out data. The EMR features $Z_{3,S}^{u}(t_\nu)$ are left unchanged because
they represent static clinical records rather than continuously measured
signals.

We consider additive, multiplicative, Laplace, drift, impulse, cumulative, and
feature-dependent mixed noise. The parameter $s>0$ controls the perturbation
level. For continuous features, the noise magnitude is scaled by the empirical
standard deviation of the corresponding feature over time. The scale
coefficients used in the experiments are
$
\sigma_{\rm add}=0.025s,
\sigma_{\rm drift}=0.015s,
\sigma_{\rm mult}=0.030s,
\sigma_{\rm cum}=0.040s.
$
The Laplace perturbation is chosen to have a root mean square magnitude
comparable to that of the additive Gaussian noise. For impulse noise, the
perturbation is activated by a Bernoulli mask with probability
$p_s=\min\{0.05,0.0025s\}$. Except for the mixed-noise experiment, only one
noise family is applied at a time.

The mixed perturbation reflects the measurement characteristics of different
features. Continuous smartwatch measurements, including heart rate, oxygen
saturation, skin temperature, and blood pressure, are subject to additive noise
and drift. Cumulative measurements, such as step counts and active minutes, are
perturbed at the increment level and then reconstructed. For the
intelligent-insole data, pressure and gait-related continuous measurements are
perturbed multiplicatively, whereas ratio, score, and coordinate features are
perturbed additively with drift. Features with prescribed ranges are clipped
after perturbation, and discrete features in both modalities are kept
unchanged.
The complete elementwise definitions and implementation of the noise
generators are available in the accompanying repository
\url{https://github.com/DSVI2025/DSVI-HD}.

For the noise-robustness study, we fix a representative source user
$\bar u^S\in\mathcal U_S$ and evaluate the predictions on the ten test days
$d_j^S$, $j=1,\ldots,10$. For a noise family $\kappa$ and noise level $s$, let
$\widehat L_{\kappa,s}^{\bar u^S,j}(t_\nu)$ denote the predicted label on day
$d_j^S$. The corresponding accuracy is
\[
\operatorname{Acc}_{\kappa,s}(\bar u^S)
=
\frac{1}{10\cdot17280}
\sum_{j=1}^{10}
\sum_{\nu=1}^{17280}
\mathbf 1
\left\{
\widehat L_{\kappa,s}^{\bar u^S,j}(t_\nu)
=
L_S^{\bar u^S,j}(t_\nu)
\right\}
\times100\%.
\]

The unperturbed accuracy for this user is $97.338\%$.
Table~\ref{tab:application_noise} presents the accuracies under different noise
families and noise levels. The predictions remain stable under most individual
perturbations, whereas drift produces a larger loss of accuracy and mixed noise
causes the greatest degradation. Thus, in this case study, the
history-dependent model is relatively robust to isolated sensor perturbations
but is more sensitive to simultaneous perturbations of different types. Since
the experiment concerns one representative source user, the results are
intended to illustrate robustness rather than provide a population-level
assessment.

\begin{figure}[htbp]
\centering
\includegraphics[width=0.8\textwidth]{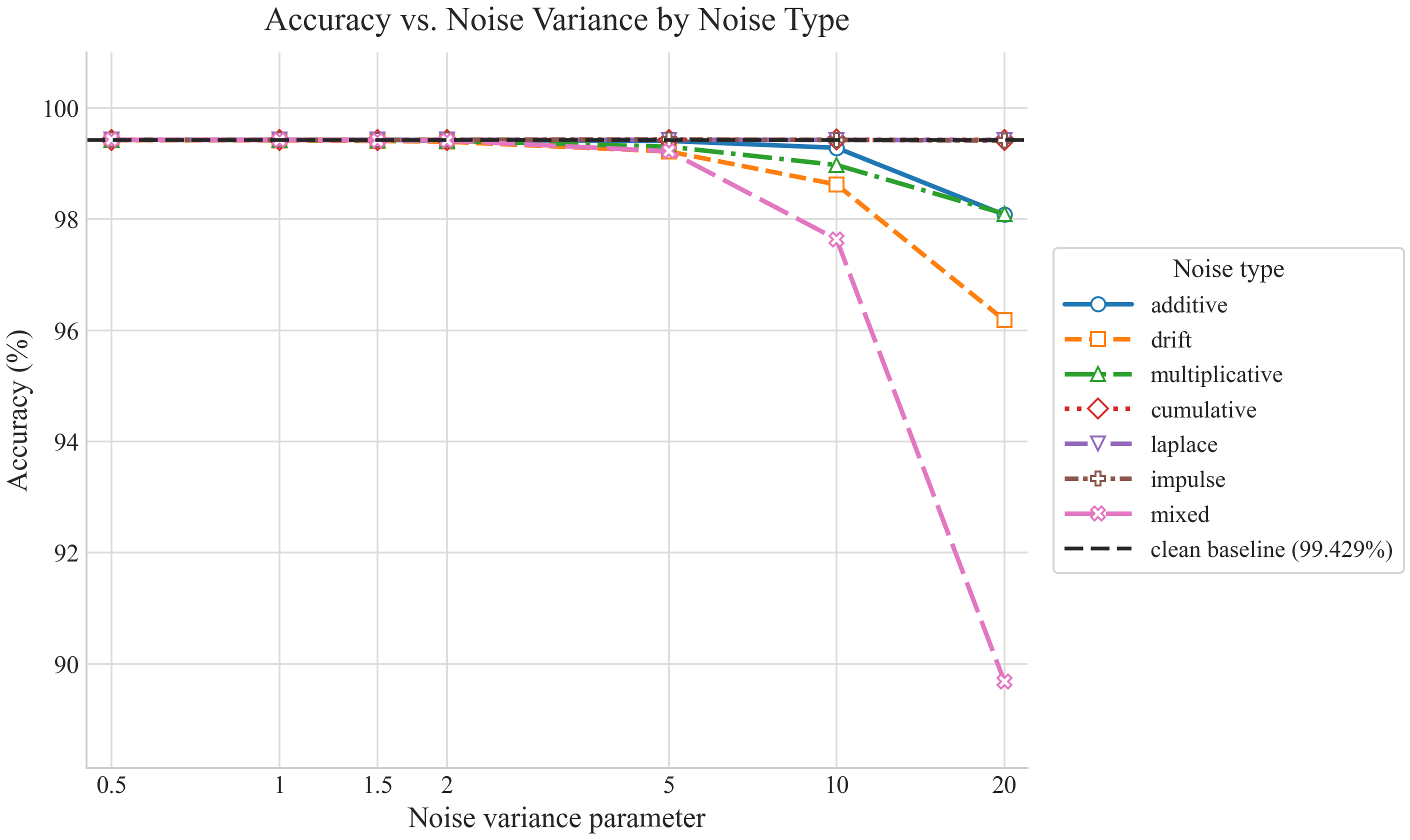}
\caption{Noise robustness for a representative held-out source-domain evaluation trajectory, with $\sigma_{\rm add}=0.025s$, $\sigma_{\rm drift}=0.015s$, $\sigma_{\rm mult}=0.030s$, $\sigma_{\rm cum}=0.040s$, and $s\in\{0.5,1,1.5,2,5,10,20\}$.}
\label{fig:application_noise_robustness}
\end{figure}

\begin{table}[htbp]
\caption{Prediction accuracy under different noise levels for one representative
held-out source-domain evaluation trajectory. The accuracy without noise is
\(97.338\%\).}
\label{tab:application_noise}
\centering
\small
\begin{tabular}{lccccccc}
\toprule
$s$ & Cumulative & Laplace & Impulse & Multiplicative & Additive & Drift & Mixed \\
\midrule
$5$  & 97.350 & 97.336 & 97.327 & 97.352 & 97.302 & 97.365 & 97.291 \\
$10$ & 97.357 & 97.336 & 97.299 & 97.174 & 97.191 & 97.161 & 96.082 \\
$20$ & 97.364 & 97.335 & 97.247 & 96.550 & 96.464 & 96.169 & 91.144 \\
\bottomrule
\end{tabular}
\end{table}

\subsection{Cross-domain transfer}
\label{subsec:application_transfer_learning}
We now investigate the cross-domain transfer of second-stage response trajectories across users. Recall that response computation and health-state label prediction proceed in two stages. First, second-stage response trajectories are derived from historical reference data. These trajectories are then combined with the current target state to generate health-state labels. In contrast, under the transfer-learning framework, source-domain response trajectories are precomputed offline and reused for target users, so online computation is limited to updating health-state label prediction.

The source and target cohorts comprise distinct individuals and contain no overlapping user trajectories. The experiment therefore assesses whether response trajectories learned from previously observed users can support online health-state prediction for new users, particularly when target-domain reference information is unavailable or costly to obtain. Our primary focus is how similarity-weighted transfer and  choice of refresh interval affect transfer performance and cost.

For each source user $u\in\mathcal U_S$, the
trajectories
$\mathbf y_{i,S}^{u,j}
(t_\nu,\bar{\xi}_{S,u}^{\iota,t_\nu})$
are computed from the source reference data and stored in advance. For a target
user $v\in\mathcal U_T$, the current target observation
$B_{i,T}^{v,j}(t_\nu)$ continues to enter the online state update, while the
precomputed source response is used in place of a response recomputed from the
target reference data. Thus, only the second-stage response trajectory is
transferred; the observations and health-state trajectory remain specific to
the target user.

\subsubsection{Similarity-weighted transfer learning}
\label{subsubsec:application_weighted_transfer}

We first compute a source--target distance at the user-pair level. For each pair
\((v,u)\!\in\!\mathcal U_T\times\mathcal U_S\), the distance is computed from the
first \(90\) reference days. Let \(\operatorname{Std}_{i}(\cdot)\) denote
columnwise standardization for modality \(i\), applied in the same way to the
corresponding source and target matrices. For \(i\!=\!1,2,3\), define

\vspace{-4pt}
\[
    d_i(v,u)
    :=
    \frac{1}{90\cdot 17280}
    \sum_{\nu=1}^{17280}
    \left\|
        \operatorname{Std}_{i}
        \left(
            \bigl(Z_{i,T}^{v}(t_\nu)\bigr)_{[1:90,:]}
        \right)
        -
        \operatorname{Std}_{i}
        \left(
            \bigl(Z_{i,S}^{u}(t_\nu)\bigr)_{[1:90,:]}
        \right)
    \right\|_F^2 .\vspace{-4pt}
\]
The overall source--target distance is
$
    d(v,u)
    :=2 d_1(v,u)+2 d_2(v,u)+ d_3(v,u).
$ The larger weights give more importance to the dynamic
smartwatch and insole modalities. A smaller \(d(v,u)\) indicates stronger
similarity between target user \(v\) and source user \(u\).
The distance is then converted into a transfer kernel by
$
    \kappa(v,u)
    :=
    \exp\{-d(v,u)/\tau(v)\}$ with $
    \tau(v)
    :=
    \operatorname{median}_{u'\in\mathcal U_S} d(v,u').
$

These weights are used to combine the precomputed source response trajectories.

With the  historical similarity score for each
target--source patient pair \((v,u)\in\mathcal U_T\times\mathcal U_S\), we can now combine several similar
source users and weight their response trajectories according to their
similarity to the target user
\cite{MansourMohriRostamizadeh2008MultipleSources,BlitzerEtAl2007DomainAdaptationBounds}.
For each target user $v\in\mathcal U_T$, let
$\operatorname{TopK}(v)\subseteq\mathcal U_S$ denote the set of $K$ source patients
with the smallest distances $d(v,u)$. For $u\in\operatorname{TopK}(v)$, define
$\omega^{(K)}(v,u):=
\kappa(v,u)/\sum_{u'\in\operatorname{TopK}(v)}\kappa(v,u')$.
During transfer, for target patient $v$, target evaluation day $d_j^T$, modality
$i$, and time $t_\nu$, we replace
$\mathbf y_{i,T}^{v,j}(t_\nu,\bar{\xi}_{T,v}^{\iota,t_\nu})$ by the weighted
source response
$$\widetilde{\mathbf y}_{i,T}^{v,j}(t_\nu):=
\sum_{u\in\operatorname{TopK}(v)}
\omega^{(K)}(v,u)\,
\mathbf y_{i,S}^{u,j}(t_\nu,\bar{\xi}_{S,u}^{\iota,t_\nu}).$$
The target observation $B_{i,T}^{v,j}(t_\nu)$ is still used in the online
state update.

\begin{figure}[htbp]
\centering
\includegraphics[width=0.7\textwidth]{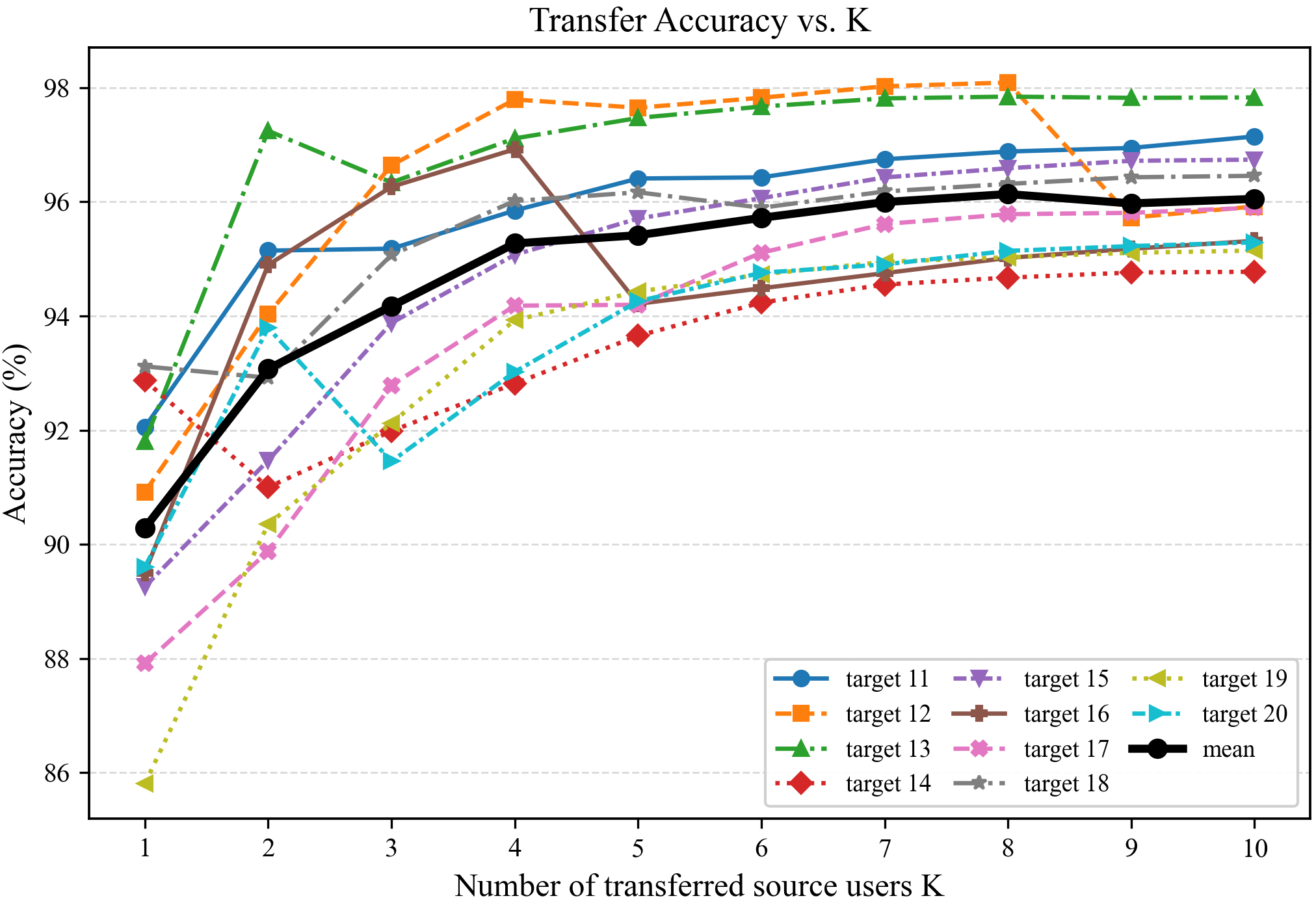}
\caption{Similarity-weighted transfer accuracy versus $K$. Colored curves denote target users; the black dashed curve denotes the mean.}
\label{fig:application_transfer_topk}
\end{figure}

\begin{figure}[htbp]
\centering
\includegraphics[width=0.7\textwidth]{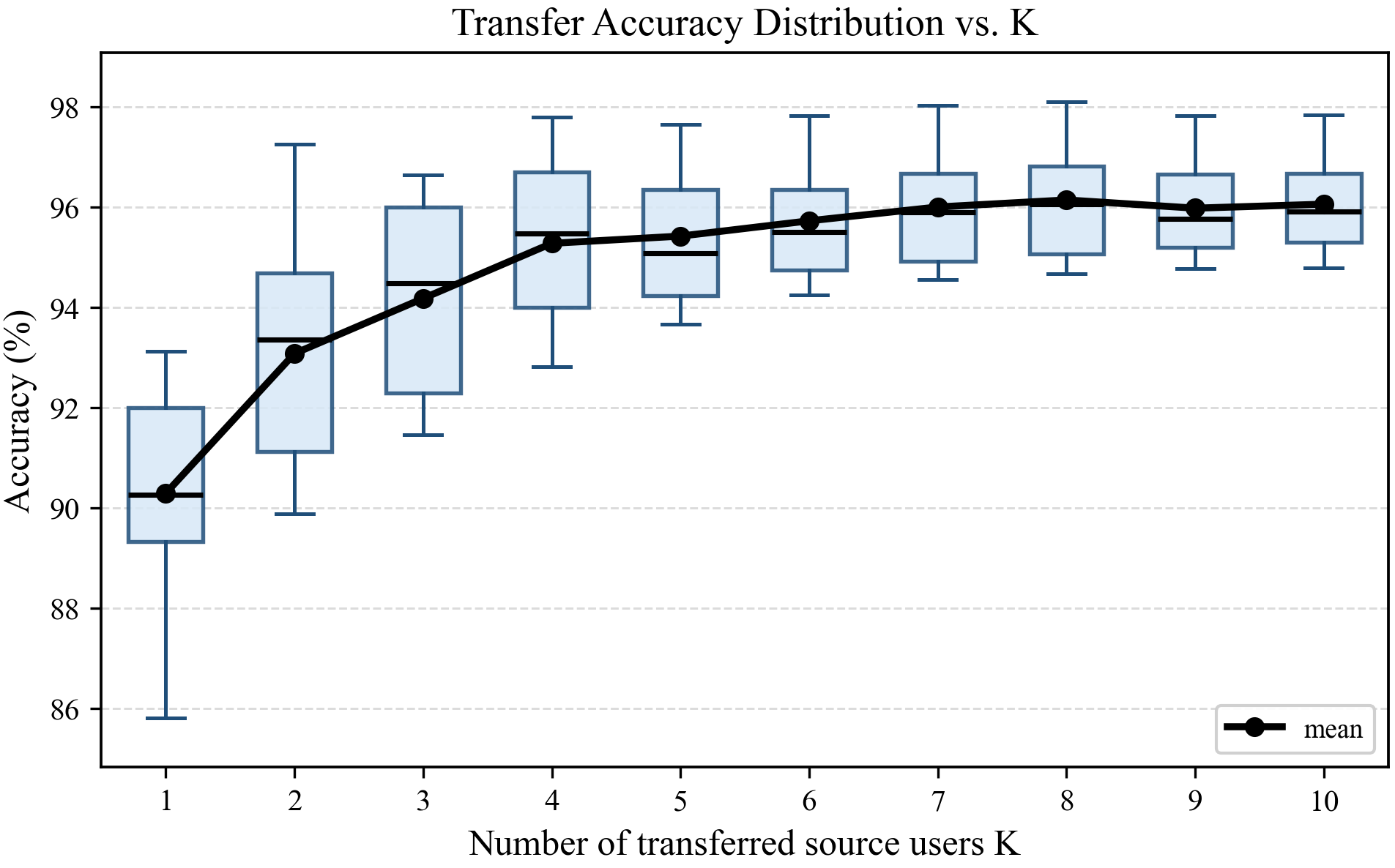}
\caption{Distribution of similarity-weighted transfer accuracy across target users for each $K$. Boxes show target-user accuracies, and the black curve shows the mean.}
\label{fig:application_transfer_topk_boxplot}
\end{figure}

Figure~\ref{fig:application_transfer_topk} and
Table~\ref{tab:application_topk} show that multi-source aggregation improves
transfer performance. The mean accuracy increases as \(K\) grows and reaches
its best value at \(K=8\), after which the improvement becomes marginal. This
result is close to the target-domain full-recomputation baseline, while avoiding
online response recomputation. The boxplot in
Figure~\ref{fig:application_transfer_topk_boxplot} further shows that
multi-source aggregation also improves the stability across target users.
Thus, the similarity score is more effective as a multi-source weighting signal
than as a single-source selection rule.

Table~\ref{tab:application_method_latency} further compares accuracy and online
latency. The weighted transfer method with \(K=8\) achieves an
accuracy of 96.614\% and a macro-F1 score of 0.967, which are close to those of
the full-recomputation baseline (97.928\% accuracy and 0.982 macro-F1), while
reducing the online batch runtime from 86.223 s to 0.620 s. This substantial latency reduction is
achieved by reusing precomputed source-domain response trajectories during the
online projected update, demonstrating the potential of transfer learning for
real-time health monitoring with limited computational resources.
The offline source-response
precomputation cost is not included in the reported online runtime.

\begin{table}[htbp]
\caption{Mean similarity-weighted transfer accuracy across ten target users.}
\label{tab:application_topk}
\centering
\setlength{\tabcolsep}{2.6pt}
\begin{tabular}{ccccccccccc}
\toprule
$K$ & 1 & 2 & 3 & 4 & 5 & 6 & 7 & 8 & 9 & 10\\
\midrule
Accuracy (\%) &
91.008 & 94.178 & 95.292 & 96.012 & 96.224 &
96.288 & 96.394 & 96.614 & 96.327 & 96.444\\
\bottomrule
\end{tabular}
\end{table}

\begin{table}[htbp]
\caption{Method performance and online latency. Per-point latency is measured at the five-second update scale with precomputed source-domain responses.}
\label{tab:application_method_latency}
\centering
\setlength{\tabcolsep}{2.2pt}
\begin{tabular}{lccccc}
\toprule
Method & Accuracy & Macro-F1 & Point ms & Batch s & Speedup\\
\midrule
Target-domain full-recomputation baseline & 97.928 & 0.982 & 0.499 & 86.223 & 1.0\\
Random-source transfer   & 91.139 & 0.904 & 0.00360 & 0.622 & 138.7\\
Nearest-source transfer  & 91.008 & 0.904 & 0.00361 & 0.623 & 138.3\\
Weighted transfer, $K=8$ & 96.614 & 0.967 & 0.00359 & 0.620 & 139.0\\
\bottomrule
\end{tabular}
\end{table}

\subsubsection{Delayed response reuse}
\label{subsec:application_temporal_transfer}

We next examine delayed reuse of the transferred second-stage response. The
target observations are still evaluated on the original sampling grid, but the
transferred source responses are refreshed only at selected time indices and
then reused between two refreshes. This gives a coarser version of the original
\(K=8\) similarity-weighted transfer rule and reduces the online refresh cost.
We use the same source-to-target setting and the same \(K=8\) transfer weights
as in Subsection~\ref{subsec:application_transfer_learning}. Let \(a\) denote
the delay parameter, measured in units of the sampling interval. For \(a=0\),
the response is refreshed at every time point. For \(a\ge1\), define the most
recent refresh index before or at \(t_\nu\) by
$
    r_a(\nu)
    =
    1+a\left\lfloor\frac{\nu-1}{a}\right\rfloor .
$
For \(a=0\), we set \(r_0(\nu)=\nu\).
Then the delayed \(K=8\) transferred response for target user
\(v\in\mathcal U_T\), target evaluation day \(d_j^T\), modality \(i\), and time
\(t_\nu\) is defined by
$
\widetilde{\mathbf y}_{i,T,a}^{v,j}(t_\nu)
=
\sum_{u\in\operatorname{TopK}(v)}
\omega^{(8)}(v,u)\,
\mathbf y_{i,S}^{u,j}
(
t_{r_a(\nu)},
\bar{\xi}_{S,u}^{\iota,t_{r_a(\nu)}}
).
$
Thus, increasing \(a\) reuses older source responses over longer time intervals.
The relative refresh cost is defined by
\[
\operatorname{RecompCost}(a)
=
\begin{cases}
1, & a=0,\\
1/a, & a\ge1,
\end{cases}
\quad
\operatorname{CostReduction}(a)=1-\operatorname{RecompCost}(a).
\]
{The delayed-update strategy further reduces the frequency of updating the
transferred responses while leaving the online label-prediction procedure unchanged.}

Figure~\ref{fig:application_temporal_lag} 
shows the trade-off between the response
refresh interval and target prediction accuracy. When \(a=0\), the response is
refreshed at every sampling point, recovering  \(K=8\)
weighted-transfer result in
Subsection~\ref{subsec:application_transfer_learning}. 
Table~\ref{tab:application_lag} summarizes the corresponding relative costs and
mean accuracies:  short refresh intervals can substantially
reduce the cost of recomputing transferred responses with little loss of
accuracy, whereas large refresh intervals eventually degrade target prediction.
In particular, the weighted transfer rule preserves high prediction accuracy
under moderate refresh delays, while the gradual accuracy degradation with
longer intervals indicates the importance of timely response updates.

\begin{figure}[htbp]
\centering
\includegraphics[width=0.7\textwidth]{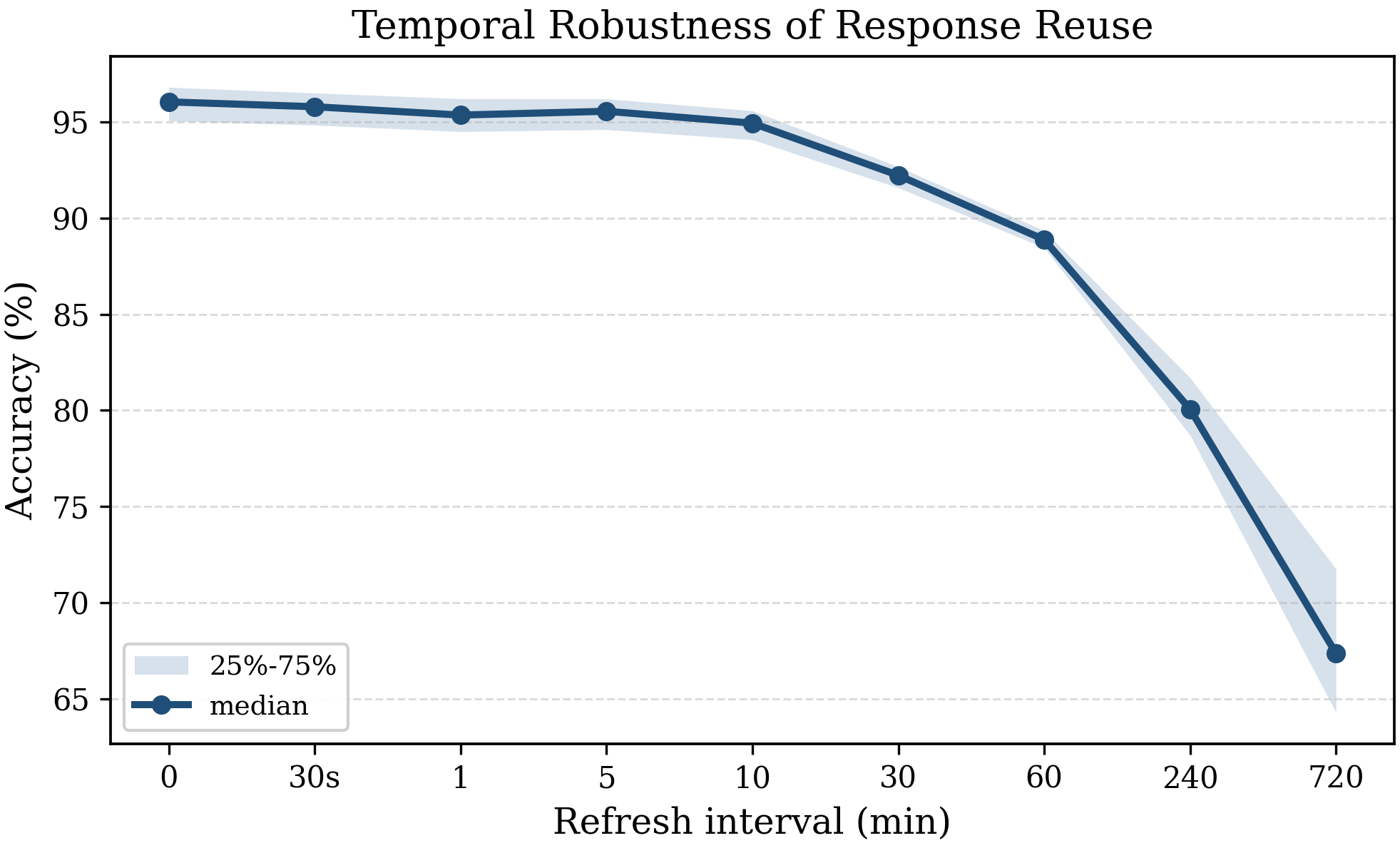}
\caption{Temporal robustness of source-to-target \(K=8\) response reuse. The
line shows median accuracy, and the shaded band shows the 25th--75th percentile
range.}
\label{fig:application_temporal_lag}
\end{figure}

\begin{table}[htbp]
\centering
\caption{Prediction accuracy and recomputation cost under delayed reuse of
transferred responses for the source-to-target \(K=8\) weighted transfer rule.}
\label{tab:application_lag}
\resizebox{\textwidth}{!}{%
\begin{tabular}{lccccccccc}
\toprule
Refresh interval (min)
& 0 & 0.5 & 1 & 5 & 10 & 30 & 60 & 240 & 720 \\
\midrule
Relative refresh cost (\%)
& 100.000 & 16.667 & 8.333 & 1.667 & 0.833 & 0.278 & 0.139 & 0.035 & 0.012 \\
Mean accuracy (\%)
& 96.614 & 96.503 & 96.274 & 95.680 & 95.054 & 93.855 & 92.499 & 89.582 & 81.990 \\
\bottomrule
\end{tabular}%
}
\end{table}

In summary, the numerical findings support both the predictive effectiveness and the computational value of the proposed DSVI framework; its history-dependent response trajectories can be robustly transferred and reused for high accuracy and low latency, especially with  limited target-domain data or online computational resources. 

\begin{itemize}
\item On held-out source-domain data, the DSVI model achieves  high F1-scores (0.9848, 0.9744, and 0.9974 for the healthy, weak, and ill classes, respectively), demonstrating consistently strong discrimination across health states. The perturbation experiments further indicate that predictions are stable under most isolated sensor-noise types. 
\item Regarding the feasibility and effectiveness of transfer learning between DSVI systems,  similarity-weighted transfer using the responses of the most similar source users attains 0.97 
accuracy and a macro-F1 score of 0.967, close to the full target-domain recomputation benchmark, while reducing the online batch runtime from 86.223 seconds to 0.620 seconds, an approximately 139-fold speedup. 
Meanwhile,  an appropriate choice of the refresh interval for the transferred source-domain responses preserves a mean accuracy of 0.963, with only about 8 percent of the full refresh cost.

\end{itemize}

\section{Conclusion}
\label{sec:conclusion}

This paper develops a history-dependent DSVI framework for dynamic systems driven by exogenous random
processes. 
The regularity and measurability of the second-stage response
and the well-posedness of the first-stage state trajectory are established, along with a
sample average approximation scheme and its uniform convergence.  A local \(1/2\)-Hölder response estimate for parametric VIs with
moving feasible sets is also derived, yielding a stability bound for the
first-stage trajectory. The stability analysis of this history-dependent DSVI supports transfer learning and the efficient reuse of response trajectories between two similar DSVI systems. 
Numerical experiments 
on the elderly-health monitoring system illustrate 
the feasibility and stability of transfer learning,
which can achieve an accuracy close to the target-domain
full-recomputation baseline with a significant reduction in computational cost, even with limited data. Future work may consider more general models with weaker assumptions  and extensions with multimodal datasets.

\bibliographystyle{siamplain}
\bibliography{Transfer_reference}

\end{document}